\titleformat{\section}[hang]{\Large \normalfont \filcenter \scshape}{\thesection}{10pt}{}
\titleformat{\subsection}[hang]{\normalfont \large \bfseries}{\thesubsection}{6pt}{}
\titlespacing*{\section}{0pt}{14pt}{3pt}
\titlespacing*{\subsection}{0pt}{14pt}{3pt}
\titlespacing*{\paragraph}{0pt}{7pt}{8pt}
\numberwithin{equation}{section}
\newtheorem{theorem}{Theorem}[section]
\newtheorem{proposition}{Proposition}[section]
\newtheorem{lemma}{Lemma}[section]
\newtheorem{example}[lemma]{Example}
\newtheorem{definition}[lemma]{Definition}
\newtheorem{remark}[lemma]{Remark}
\begin{document}

\newcommand{\Hc}{\mathcal{H}}
\newcommand{\A}{\mathcal{A}}
\newcommand{\C}{\mathbb{C}}
\newcommand{\R}{\mathbb{R}}
\newcommand{\Q}{\mathbb{Q}}
\newcommand{\Z}{\mathbb{Z}}
\newcommand{\N}{\mathbb{N}}
\newcommand{\levy}{\mathbb{X}^H}
\newcommand{\sheet}{\mathbb{W}^{\mathbf{H}}}
\newcommand{\BB}{\boldsymbol{B}}
\newcommand{\alphar}{\texttt{$\boldsymbol{\alpha}$}}
\newcommand{\sigmar}{\texttt{\large $\boldsymbol{\sigma}$}}
\newcommand{\qA}{q_{\underline{\mathcal{A}}}}
\newcommand{\EE}{\mathbb{E}}
\newcommand{\PP}{\mathbb{P}}
\newcommand{\VV}{\mathbb{V}\text{ar}}
\newcommand{\h}{\normalfont{\textit{\textbf{h}}}}
\newcommand{\dH}{\normalfont{\text{dim}}_H}
\newcommand{\dd}{\normalfont{\text{d}}}
\newcommand{\F}{\mathbf{F}}
\newcommand{\x}{\mathtt{\mathbf{x}}}
\newcommand{\psiu}{\Psi_h^{(u)}}
\newcommand{\psil}{\Psi_h^{(\ell)}}
\newcommand{\psiut}{\widetilde{\Psi}_h^{(u)}}
\newcommand{\psilt}{\widetilde{\Psi}_h^{(\ell)}}
\newcommand{\etau}{\eta^{(h,u)}}
\newcommand{\etal}{\eta^{(h,\ell)}}

\title{\vspace{-1cm} \normalfont \Large \uppercase {Increment stationarity of $L^2$-indexed stochastic processes: spectral representation and characterization}} 

\author{Alexandre Richard}
\affil{\small TOSCA team -- INRIA Sophia-Antipolis\\  2004 route des Lucioles, F-06902 Sophia-Antipolis Cedex, France\\ e-mail: \href{mailto:alexandre.richard@inria.fr}{alexandre.richard@inria.fr}}

\date{}

\maketitle

\begin{abstract}
We are interested in the increment stationarity property for $L^2$-indexed stochastic processes, which is a fairly general concern since many random fields can be interpreted as the restriction of a more generally defined $L^2$-indexed process. We first give a spectral representation theorem in the sense of \citet{Ito54}, and see potential applications on random fields, in particular on the $L^2$-indexed extension of the fractional Brownian motion. Then we prove that this latter process is characterized by its increment stationarity and self-similarity properties, as in the one-dimensional case.
\end{abstract}

{\sl MSC2010 classification\/}: 60\,G\,10, 60\,G\,12, 60\,G\,20, 60\,G\,57, 60\,G\,60, 60\,G\,15, 28\,C\,20.

{\sl Key words\/}: Stationarity, Random fields, Spectral representation, Fractional Brownian motion.

\section{Introduction}

It is known since the works of Ito \cite{Ito54} and Yaglom \cite{Yaglom57}, that if a (multiparameter) stochastic processes $X$ is increment-stationary in the sense that for any $s,s',t,t'$ and $h \in \R^d$:
\begin{equation*}
\EE\left((X_{t+h}-X_{s+h}) (X_{t'+h}-X_{s'+h})\right) = \EE\left((X_t-X_s) (X_{t'}-X_{s'})\right)\ ,
\end{equation*}
then $X$ admits a spectral representation, understood as: there exist a random measure $M$ on $\R^d$, with control measure $m$, and an uncorrelated random vector $Y \in L^2(\Omega;\R^d)$ such that:
\begin{equation*}
\forall t\in \R^d,\quad X_t = \int_{\R^d} \left(e^{i\langle t, x\rangle}-1\right)\ M(\dd x) + \langle t, Y\rangle\ .
\end{equation*}
Such representations have important applications in the study of sample path properties of stochastic processes (see \cite{Monrad, Talagrand}, to cite but a few). However, some processes that appear now frequently in the literature (for instance in the domain of stochastic partial differential equations \cite{Dalangminicourse,BalanJolisQS}) possess a different type of stationarity. This is the case of the Brownian sheet (the random field whose distributional derivative is the white noise on $\R^d$), and more generally of the fractional Brownian sheet (see Example \ref{ex:111}). Let us write this increment stationarity property in $\R^2$: let $\mathbb{W}$ be a fractional Brownian sheet, and define its increments by: $\Delta_{[u,v]} \mathbb{W} = \mathbb{W}_{v} - \mathbb{W}_{(u_1,v_2)} - \mathbb{W}_{(v_1,u_2)} + \mathbb{W}_{u}$ for any $u=(u_1,u_2) \preccurlyeq v=(v_1,v_2)\in \R^2$. Then for any $u\preccurlyeq v,\ u'\preccurlyeq v'$ and $h\in \R^2$,
\begin{equation*}
\EE\left(\Delta_{[u+h,v+h]} \mathbb{W}\ \Delta_{[u'+h,v'+h]} \mathbb{W}\right) = \EE\left(\Delta_{[u,v]} \mathbb{W}\ \Delta_{[u',v']} \mathbb{W}\right)\ .
\end{equation*}
This led \citet{BasseOConnor} to propose another spectral representation theorem for these processes, which permitted the construction of multiparameter stochastic integrals against these processes in the sense of Walsh.

Using a different technique, we obtain a similar result in Section \ref{sec:2}, for a larger class of processes. Our Theorem \ref{th:00} states that any random field $\{X(f),\ f\in L^2(T,m)\}$, where $(T,m)$ is any measure space such that $L^2(T,m)$ is separable, which has second moments and satisfies the following increment-stationarity property: $\forall f,f',g,g',h\in L^2(T,m),$
\begin{equation*}
\EE\left[(X(f+h)-X(g+h))\ (X(f'+h)-X(g'+h))\right] = \EE\left[(X(f)-X(g))\ (X(f')-X(g'))\right]\ ,
\end{equation*}
admits a spectral representation.
We explain in paragraph \ref{subsec:def} why this property covers many random fields, and how such random fields appear as the restriction of some $L^2(T,m)$-indexed process. In particular, all the known multiparameter extensions of the fractional Brownian motion are part of this class of processes. The counterpart for having such level of generality is that in some cases the resulting sprectral representation is either degenerate, or expressed in a too abstract setting for potential applications. However there are examples where the theorem permits to deduce sample path properties of multiparameter processes \cite{Richard2}. In all this section, the prototypical example of a process to which our spectral representation theorem applies is the $L^2(T,m)$-indexed fractional Brownian motion (defined in \cite{Richard} as an extension of the set-indexed fractional Brownian motion \cite{ehem}).

Hence in Section \ref{sec:3} of this paper, we focus on the $L^2(T,m)$-indexed fBm. For any $H\in (0,1/2]$, this real-valued centred Gaussian process has a covariance given by:
\begin{equation}\label{eq:covariance}
\EE\left(\BB^H(f)\ \BB^H(g)\right) = \frac{1}{2}\left(m\left(f^2\right)^{2H} + m\left(g^2\right)^{2H} - m\left((f-g)^2\right)^{2H}\right)\ ,\quad \forall f,g\in L^2(T,m)
\end{equation}
where $m(\cdot)$ denotes the linear functional $\int_T \cdot\ \dd m$ of $L^2(T,m)$. It encompasses most of the different known extensions of the fractional Brownian motion. We characterize the $L^2(T,m)$-indexed fractional Brownian motions in terms of self-similarity and increment-stationarity properties. Let us recall that the fractional Brownian motion of Hurst parameter $H\in (0,1)$ is the only (up to normalization of its variance) Gaussian process on $\R$ that has stationary increments and self-similarity of order $H$. In the multiparameter setting, things become more complicated as there are several possible definitions of increment stationarity as well as self-similarity. For instance, the Lévy fractional Brownian motion of parameter $H$, whose covariance is given by:
\begin{equation*}
\EE\left(\mathbb{X}^H_s \mathbb{X}^H_t\right) = \frac{1}{2}\left(\|s\|^{2H} + \|t\|^{2H} -\|s-t\|^{2H}\right)\ ,\quad s,t\in \R^d,
\end{equation*}
is self-similar of order $H$ and has a strong increment stationarity property on $\R^d$, i.e. against translations and rotations in $\R^d$:
\begin{equation*}
\forall g\in \mathcal{G}(\R^d),\quad \{\mathbb{X}_{g(t)}-\mathbb{X}_{g(0)},\ t\in \R^d\}\overset{(d)}{=} \{\mathbb{X}_t,\ t\in \R^d\} \ ,
\end{equation*}
where $\mathcal{G}(\R^d)$ is the group of rigid motions of $\R^d$. Reciprocically, it is the only Gaussian process having these properties, up to normalization of its variance \cite[p.393]{Samorodnitsky}. There is no such simple characterization for the fractional Brownian sheet (see the review \cite{ehemMpfBm}). We extend the notions of self-similarity and increment stationarity introduced in \cite{ehem,ehemMpfBm}, and give two characterizations of the $L^2$-fBm, depending on the definition of self-similarity and increment stationarity that are chosen for $L^2$-indexed processes.

\section{Spectral representation of $L^2$-stationary processes}\label{sec:2}

\subsection{Preliminaries}

A \emph{Gel'fand triple} consists in a Hilbert space $H$ and a larger space $E$ such that $H$ is densely and continuously embedded into $E$. We further assume here that $E$ is a Banach space. We shall denote by $E^*$ the topological dual of $E$, thus the inclusion $E^*\subset H^*$ leads to write $E^*\subset H\subset E$ for any Gel'fand triple. To continue with notations, we will use the duality bracket symbol $\langle \xi,x\rangle$, for any $\xi\in E^*$ and $x\in E$.\\
In general, the embedding between $H$ and $E$ is continuous. We will need it to be Hilbert-Schmidt for an extension of Bochner's theorem to be valid. This is the content of the following lemma, proved in \cite{Richard2} (actually with slightly stronger conclusions than written here).

\begin{lemma}\label{lem:HS}
Let $H$ be a separable Hilbert space. There is a separable Hilbert space $(E,\|\cdot\|)$ such that $E^*\subset H\subset E$ is a Gel'fand triple and the embedding $H\subset E$ is Hilbert-Schmidt.
\end{lemma}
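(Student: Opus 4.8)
The plan is to obtain $E$ by completing $H$ with respect to a deliberately weakened Hilbertian norm, engineered so that the inclusion becomes Hilbert-Schmidt by construction. Since $H$ is separable I would fix an orthonormal basis $(e_n)_{n\geq 1}$ and a sequence of weights $(\lambda_n)_{n\geq 1}$ with $0<\lambda_n\leq 1$ and $\sum_{n\geq 1}\lambda_n^2<\infty$ (for instance $\lambda_n=1/n$). On $H$ I define the inner product
\[
\langle x,y\rangle_E:=\sum_{n\geq 1}\lambda_n^2\,\langle x,e_n\rangle_H\,\langle y,e_n\rangle_H\ ,
\]
which is positive definite because every $\lambda_n>0$, and I take $(E,\|\cdot\|)$ to be the completion of $H$ for the associated norm $\|x\|:=\langle x,x\rangle_E^{1/2}$.

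Next I would check that $E^*\subset H\subset E$ is a Gel'fand triple. The bound $\lambda_n\leq 1$ gives $\|x\|\leq\|x\|_H$ for all $x\in H$, so the inclusion $H\subset E$ is continuous; it is dense since $H$ is dense in its own completion, and injective since $\|\cdot\|$ is a genuine norm on $H$. Separability of $E$ is then inherited from $H$ through this dense range. For the dual inclusion, restricting any $\xi\in E^*$ to the dense subspace $H$ produces a continuous linear functional on $H$; keeping the Riesz identification $H^*\cong H$ fixed, this realises $E^*$ as a subspace of $H$, which yields the chain $E^*\subset H\subset E$.

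Finally, the Hilbert-Schmidt property is immediate from the design of the norm: a direct computation gives $\|e_n\|^2=\langle e_n,e_n\rangle_E=\lambda_n^2$, whence $\sum_{n\geq 1}\|e_n\|^2=\sum_{n\geq 1}\lambda_n^2<\infty$, and since $(e_n)$ is an orthonormal basis of $H$ this is exactly the statement that the inclusion $H\hookrightarrow E$ is Hilbert-Schmidt. The step I would be most careful about is the Gel'fand requirement $E^*\subset H$, as opposed to the trivial $E^*\subset E$: one must refrain from identifying $E$ with its own dual and instead view $E^*$ as sitting inside $H$ via the transpose of the injective, dense-range inclusion $H\hookrightarrow E$, with the identification $H\cong H^*$ held fixed throughout. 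The remaining verifications are routine.
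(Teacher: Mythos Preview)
Your proof is correct and is the standard construction of a rigged Hilbert space via weighted $\ell^2$ norms. Note, however, that the paper does not actually prove this lemma: it simply quotes the statement and refers to \cite{Richard2} for a proof (remarking that slightly stronger conclusions hold there), so there is no in-paper argument to compare against. Your construction is precisely the classical one and would be the natural choice for a self-contained proof here; the one point you flag yourself---that the identification $H\cong H^*$ is kept fixed while $E$ is \emph{not} identified with $E^*$, so that $E^*\hookrightarrow H$ arises as the transpose of the dense-range embedding $H\hookrightarrow E$---is indeed the only place requiring care, and you handle it correctly.
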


Some Hilbert spaces associated to a kernel (as for instance a covariance) will be particularly useful.
\begin{definition}[Reproducing Kernel Hilbert Space]\label{def:RKHS}
Let $(T,m)$ be a separable and complete metric space and $C$ a continuous covariance function on $T\times T$. $C$ determines a unique Hilbert space $H(C)$ satisfying the following properties: \emph{i)} $H(C)$ is a space of functions mapping $T$ to $\R$; \emph{ii)} for all $t\in T$, $C(t,\cdot) \in H(C)$; \emph{iii)} for all $t\in T$, $\forall f\in H(C)$, $\left(f,C(t,\cdot)\right)_{H(C)} = f(t)$ .
\end{definition}

\noindent $H(C)$ is spanned by the set of mappings $\{C(t,\cdot),\ t\in T\}$, thus one can extract a basis of $H(C)$ of the form $\{C(t_n,\cdot),\ t_n\in T\}$. In the sequel, $(T,m)$ is always assumed to be a separable and complete metric space, which ensures that $L^2(T,m)$ is separable.\\

As mentioned in the introduction, spectral representations involve random measures. We provide below a formal definition of such objects.
\begin{definition}
Let $m$ be a finite measure on the Borel sets of a topological space $T$, which are denoted by $\mathcal{B}(T)$. Let $(\Omega,\mathcal{F},\PP)$ be a probability space.
A complex-valued \emph{random measure} on $\mathcal{B}(T)$ with control measure $m$ is a measurable mapping $M:\mathcal{B}(T)\rightarrow L^2_\C(\Omega)$ satisfying:
\begin{enumerate}[label=(\roman*)]
\item zero mean: $\EE\left(M(A)\right) = 0$ for any $A\in \mathcal{B}(T)$;
\item finite additivity: $M(A\cup B) = M(A)+M(B)\ a.s.$ for any disjoint $A,B\in \mathcal{B}(T)$;
\item covariance: $\EE\left(M(A)\ \overline{M(B)}\right) = m(A\cap B)$ for any $A,B\in \mathcal{B}(T)$.
\end{enumerate}
Since we will only encounter symmetric (random) measures on vector spaces, we readily assume that for any $A\in \mathcal{B}(T)$, $M(A) = \overline{M(-A)}\ a.s.$
\end{definition}
Stochastic integrals with respect to a random measure can be defined for deterministic integrands. As usual, the first step is to define it for elementary functions via the relation $\int \mathbf{1}_A \dd M = M(A)$, then extending it to simple functions. This establishes a linear isometry between $L^2_\C(m)$ and $L^2_\C(\Omega)$ (in the sequel we drop the $\C$ indexing, unless it needs to be specified). This isometry extends to the entire space $L^2(m)$.

\subsection{Definitions of increment-stationarity and examples}\label{subsec:def}

In this paragraph, we precise the terminology related to stationarity that we use in this article. Note that our main result concerns $L^2$-indexed stochastic processes, and since most random fields of interest are neither indexed by an infinite-dimensional vector space, nor even a vector space, our goal here is also to explain why this setting is interesting nonetheless.

For a given second-order\footnote{i.e. having finite second moments.} $T$-indexed random field $X$ with covariance $C$, we will consider the following objects: if there exist an $H(C)$-valued mapping $\mathtt{f}:t\in T \mapsto \mathtt{f}_t \in H(C)$ and an $H(C)$-indexed process $\widehat{X}$ such that $X_t = \widehat{X}(\mathtt{f}_t)$ for any $t\in T$, then we say that $X$ is \emph{compatible with $H$-indexing}. In case there exist a set-valued mapping $A:t\in T \mapsto A_t \in \mathcal{B}(T)$ and an isometry mapping $\mathtt{f}_t$ to $\mathbf{1}_{A_t}$ in some $L^2(T,m)$ space, we say that $X$ is \emph{compatible with set-indexing}. In the examples we will provide, the existence of $\widehat{X}$ is facilitated by the fact that the law of $X_t$ can be expressed in terms of $m(A_t)$, for some measure $m$.

\begin{example}\label{ex:indexings}
We begin with a few examples of interesting set-valued mappings.
\begin{enumerate}[parsep=0.0cm,topsep=0.1cm,itemindent=0.5cm,leftmargin=0.0cm]
\item The simplest example that comes to mind is the collection of rectangles of $\R^d$: $A_t = [0,t]$ and $m$ is the Lebesgue measure.
\item\label{item:Levy} There is a mapping $A$ and a measure $m_d$ on $\R^d$ such that $m_d(A_t\bigtriangleup A_s) = \|t-s\|$ for any $s,t\in \R^d$, where $\|\cdot\|$ is the Euclidean norm and $\bigtriangleup$ is the symmetric difference of sets.  Roughly, $A_t$ is the set of all hyperplanes that separates $0$ and $t$. This construction is fully described in \cite[Chap. 4]{Lifshits} or \cite[p.401]{Samorodnitsky}.
\item\label{item:sheet} A similar construction due to Takenaka (see also \cite[p.402-403]{Samorodnitsky}) gives the existence for $H\in(0,1/2]$ of a measure $m_d^H$ and a set-valued mapping $A$ such that $m_d^H(A_t\bigtriangleup A_s) = \|t-s\|^{2H}, \forall t,s\in \R^d$. Identically for a vector $\mathbf{H} = (H_1,\dots,H_d)\in (0,1/2]^d$, one can construct, by tensorization of one-dimensional measures, a new measure $m_d^{\mathbf{H}}$ and a set-valued mapping $A$ such that $m_d^{\mathbf{H}}(A_t\bigtriangleup A_s) = \prod_{k=1}^d \|t_k-s_k\|^{2H_k}$. 
\end{enumerate}
\end{example}

Let $\text{dom}\ \widehat{X}$ be the domain of definition of $\widehat{X}$.

\begin{definition}\label{def:WSIS}
We will say that a centred random field $X$ indexed by $(T,m)$ is \emph{wide-sense increment-stationary} if the following set of assumptions holds:
\begin{enumerate}[label=(\roman*),parsep=0.1cm]
\item $X$ is compatible with $L^2$-indexing for the mapping $\mathtt{f}$ ($\widehat{X}(\mathtt{f}_t) = X_t$ for any $t\in T$) and $\text{dom}\ \widehat{X}$ is a subvector space of $H(C)$;
\item $\widehat{X}$ is $L^2$-increment stationary, i.e. it has finite second moments at any point and it satisfies, for any $f_1,f_2,g_1,g_2$ and $h\in \text{dom}\ \widehat{X}$:
\begin{equation*}
\EE\left((\widehat{X}(f_1+h)-\widehat{X}(f_2+h)) (\widehat{X}(g_1+h)-\widehat{X}(g_2+h))\right) = \EE\left((\widehat{X}(f_1)-\widehat{X}(f_2)) (\widehat{X}(g_1)-\widehat{X}(g_2))\right) \ .
\end{equation*}
\end{enumerate}
\end{definition}

\

Let us remark that the existence of $\widehat{X}$ is close to the notion of ``model'' described in \cite{Lifshits}, although it is slightly less demanding. The choice of this type of stationarity for $\widehat{X}$ is motivated by the spectral representation theorem of the next section. Note also that the ``$L^2$'' term in ``$L^2$-increment stationary'' refers to the $L^2(T,m)$ indexing and not to the existence of the second moments of the process. \\

We present now a few wide-sense increment-stationary processes based on the examples of measure spaces given above.
\begin{example}\label{ex:111}
\begin{enumerate}[parsep=0.0cm,itemindent=0.5cm,leftmargin=0.0cm]
\item For any fixed $H\in (0,1)$ ($H=1/2$ corresponds to the Brownian case), there is a centred Gaussian process indexed by $\R^d$ which has the following increments:
\begin{align*}
\EE\left((\levy_t-\levy_s)^2\right) = \|t-s\|^{2H}\ .
\end{align*}
This process is called Lévy (fractional) Brownian motion and has the \emph{simple increment stationarity} property: $\EE\left((\levy_{t+h}-\levy_{s+h})(\levy_{t'+h}-\levy_{s'+h})\right)=\EE\left((\levy_{t}-\levy_{s})(\levy_{t'}-\levy_{s'})\right)$ for any $s,s',t,t',h\in \R^d$. Besides, the Euclidean space is compatible with set-indexing (see Example \ref{ex:indexings} point \ref{item:Levy} for the definition of $A_t$ and $m_d$) and the $L^2(\R^d,m_d)$-indexed Gaussian process defined by:
\begin{equation*}
\EE\left(\widehat{\mathbb{X}}^H(f)\ \widehat{\mathbb{X}}^H(g)\right) =\frac{1}{2}\left(m_d(f^2)^{2H} + m_d(g^2)^{2H} - m_d((f-g)^2)^{2H}\right)
\end{equation*}
is well-defined for $H\leq 1/2$ (see \cite{ehem}) and for any $t\in \R^d$, $\widehat{\mathbb{X}}^H(\mathbf{1}_{A_t}) = \levy_t$.
\item The fractional Brownian sheet $\sheet$ of Hurst parameter $\mathbf{H} = (H_1,\dots,H_d)\in (0,1)^d$ is the centred Gaussian process with covariance: let $\mathbf{t} = (t_1,\dots, t_d)\in \R_+^d,\ \mathbf{s} = (s_1,\dots, s_d)\in \R_+^d$,
\begin{align*}
\EE\left(\sheet_\mathbf{t}\ \sheet_\mathbf{s}\right) &= 2^{-d} \prod_{k=1}^d \left(|t_k|^{2H_k} + |s_k|^{2H_k} - |t_k-s_k|^{2H_k}\right) \\
&= R^{\otimes d}_\mathbf{H}(\mathbf{1}_{[0,\mathbf{t}]}, \mathbf{1}_{[0,\mathbf{s}]}) \ .
\end{align*}
$R^{\otimes d}_\mathbf{H}$ is a notation that holds when $\mathbf{H}\in (0,1/2]^d$. Indeed, for $H_k\in (0,1/2]$ and $f,g\in L^2(\R_+,\lambda_1)$, $R_{H_k}(f,g) = 1/2(\|f\|^{4 H_k}+\|g\|^{4 H_k}-\|f-g\|^{4 H_k})$ is a particular case of (\ref{eq:covariance}) ($\lambda_d$ will denote the $d$-dimensional Lebesgue measure) and $R_{H_k}(\mathbf{1}_{[0,t_k]},\mathbf{1}_{[0,s_k]})$ appears in the above product. The tensor product of such covariances yields a covariance on $\bigotimes_{k=1}^d L^2(\R_+,\lambda_1)$ which is isometric to $L^2(\R_+^d,\lambda_d)$, thus $R^{\otimes d}_\mathbf{H} = \bigotimes_{k=1}^d R_{H_k}$. Let $\widehat{\mathbb{W}}^\mathbf{H}$ be the $L^2(\R_+^d)$-indexed Gaussian process with covariance $R^{\otimes d}_\mathbf{H}$.\\
$\widehat{\mathbb{W}}^\mathbf{H}$ is $L^2$-increment stationary: this follows from the \emph{sheet increment stationarity} property of $\sheet$. This property is the main object of study in \cite{BasseOConnor} and is expressed as follows: for any $s\preccurlyeq t$, $s'\preccurlyeq t'$ and $u\in \R^d$,
\begin{align*}
\EE\left(\Delta \sheet ([s+u,t+u])\ \Delta \sheet ([s'+u,t'+u])  \right) = \EE\left(\Delta \sheet ([s,t])\ \Delta \sheet ([s',t'])  \right)
\end{align*}
where $\Delta \sheet$ is the process obtained by the inclusion-exclusion formula. That is, for $\mathbf{s}\preccurlyeq \mathbf{t}$, $\Delta \sheet ([\mathbf{s},\mathbf{t}]) = \sum_{\boldsymbol{\epsilon}\in \{0,1\}^d} (-1)^\epsilon \sheet_{c_1(\epsilon_1),\dots,c_d(\epsilon_d)}$, where $\epsilon = |\boldsymbol{\epsilon}|=\epsilon_1+\dots+\epsilon_d$ and $c_k(\epsilon_k)= t_k$ if $\epsilon_k=0$, $s_k$ otherwise.
\item For any $H\in (0,1/2]$, the multiparameter fractional Brownian motion is the Gaussian process with covariance given by:
\begin{equation}\label{eq:cov mpfBm}
\EE\left(\mathbb{B}^H_s \mathbb{B}^H_t\right) = \frac{1}{2} \left(\lambda_d\left([0,s]\right)^{2H}+\lambda_d\left([0,t]\right)^{2H}-\lambda_d\left([0,s]\bigtriangleup [0,t]\right)^{2H}\right)\ ,\ s,t\in \R_+^d \ .
\end{equation}
Its extension to an $L^2(\R_+^d,\lambda)$-indexed process which is $L^2$-increment stationary is straightforward from (\ref{eq:covariance}) and has been studied in \cite{Richard}. Hence it is also increment stationary in the wide sense. When only observed as a multiparameter process, it possesses the following increment stationarity property (see \cite{ehemMpfBm}): for any $t\preccurlyeq t'$ and any $\tau \in \R_+^d$,
\begin{equation}\label{eq:measure stationarity}
\lambda\left([0,t']\setminus[0,t]\right) = \lambda([0,\tau])\ \Rightarrow \ \mathbb{B}^H_{t'}-\mathbb{B}^H_{t} \overset{(d)}{=} \mathbb{B}^H_{\tau}\ .
\end{equation}
This is in fact a weak form of the \emph{measure increment stationarity} presented in Section \ref{sec:3}.\\
When $H=1$ and $\mathbf{H}=(\frac{1}{2},\dots,\frac{1}{2})$, $\mathbb{B}^H$ and $\mathbb{W}^{\mathbf{H}}$ above are the same process, known as Brownian sheet. 
\end{enumerate}
\end{example}

One of our initial motivations for this work was to obtain a spectral representation theorem for processes having the measure increment stationarity, and a fractal characterization of the multiparameter fBm based on this stationarity property. We will see that measure increment stationary may be in fact too weak for these purposes, which is why we introduced wide-sense increment stationary.

We only presented Gaussian examples but stable process could also be exhibited (\cite{Samorodnitsky}). These were examples of processes that are compatible with set-indexing and that extend naturally to a function space indexing. If no such natural extension is available, one can always resort to the following result, but with consequences that are discussed at the end of this paragraph and paragraph \ref{subsec:examples}.

\begin{proposition}\label{prop:linearWSIS}
Let $(T,m)$ satisfy the aforementioned conditions. Any second order $T$-indexed process with covariance $C$ extends to a linear $H(C)$-indexed process and thus is wide-sense increment-stationary.
\end{proposition}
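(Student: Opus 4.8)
The plan is to realize the abstract process $\widehat{X}$ as the canonical isometric image of $X$ inside $L^2(\Omega)$, using the reproducing property of $H(C)$ to match covariances with inner products. Assume without loss of generality that $X$ is centred (otherwise replace $X_t$ by $X_t - \EE(X_t)$, which leaves $C$ unchanged). First I would set $\mathtt{f}_t = C(t,\cdot) \in H(C)$ and define $\widehat{X}$ on the generating family by $\widehat{X}(C(t,\cdot)) := X_t$, extending by linearity to the linear span of $\{C(t,\cdot):t\in T\}$. The computation that makes the construction work is that, by the reproducing property (Definition \ref{def:RKHS}, iii) applied to $f=C(s,\cdot)$, one has $(C(s,\cdot),C(t,\cdot))_{H(C)} = C(s,t) = \EE(X_s X_t)$, so the prescription respects inner products of generators.

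The hard part — in fact the only place where something could go wrong — is that this prescription is well defined and isometric on the span, since the generators $\{C(t,\cdot)\}$ need not be linearly independent. I would check this directly: for any finite linear combination,
\[ \Big\| \sum_i a_i X_{t_i} \Big\|_{L^2(\Omega)}^2 = \sum_{i,j} a_i a_j\, C(t_i,t_j) = \Big\| \sum_i a_i C(t_i,\cdot) \Big\|_{H(C)}^2 . \]
In particular, whenever a combination of generators vanishes in $H(C)$, the corresponding combination of the $X_{t_i}$ vanishes in $L^2(\Omega)$; hence $\widehat{X}$ is a well-defined linear isometry on the span, which is dense in $H(C)$. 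Since $L^2(\Omega)$ is complete, this isometry extends uniquely by continuity to all of $H(C)$, yielding a linear $H(C)$-indexed process $\widehat{X}$ with $\text{dom}\,\widehat{X} = H(C)$ and $X_t = \widehat{X}(\mathtt{f}_t)$ for every $t\in T$.

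It then remains to read off wide-sense increment stationarity from Definition \ref{def:WSIS}. Condition (i) is immediate: $X$ is compatible with $H(C)$-indexing through $\mathtt{f}$ (equivalently with $L^2$-indexing, as $H(C)$ is a separable Hilbert space), and $\text{dom}\,\widehat{X} = H(C)$ is trivially a subvector space. For condition (ii), linearity of $\widehat{X}$ makes the increments translation-invariant at the level of the random variables themselves: for any $h\in H(C)$, $\widehat{X}(f_1+h)-\widehat{X}(f_2+h) = \widehat{X}(f_1)-\widehat{X}(f_2)$ in $L^2(\Omega)$. Consequently the two sides of the $L^2$-increment stationarity identity coincide term by term, so the property holds trivially and $X$ is wide-sense increment-stationary. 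I expect no genuine difficulty beyond the bookkeeping of the well-definedness step; all the substance of the statement is carried by the prior construction of $H(C)$, which I may assume.
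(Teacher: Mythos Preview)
Your proof is correct and follows essentially the same approach as the paper: define $\widehat{X}$ on the generators $C(t,\cdot)\mapsto X_t$, extend by linearity, and observe that linearity makes the increment-stationarity identity trivial. The only difference is bookkeeping: the paper selects a countable basis $\{C(t_n,\cdot)\}$ of $H(C)$ and extends by linearity to finite combinations, whereas you verify well-definedness on the full generating set via the isometry $\|\sum a_i X_{t_i}\|_{L^2(\Omega)} = \|\sum a_i C(t_i,\cdot)\|_{H(C)}$ and then extend by continuity to all of $H(C)$; your version is slightly more complete in that it guarantees every $\mathtt{f}_t = C(t,\cdot)$ lies in $\text{dom}\,\widehat{X}$ without further argument.
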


\begin{proof}
Let $C$ denote the covariance of $X$ and let $\{C(t_n,\cdot),\ n\in \N\}$ be a basis of $H(C)$ (where $t_n\in\R^d, \forall n$). Then define $\widehat{X}(C(t_n,\cdot)) = X_{t_n}$ for any $n$ and extend $\widehat{X}$ to $H(C)$ by linearity. That is for $\mathtt{f} = \sum_{n=1}^\infty \alpha_n(\mathtt{f}) C(t_n,\cdot)$, where only a finite number of terms in the sum are non-zero, $\widehat{X}(\mathtt{f}) = \sum_{n=1}^\infty \alpha_n(\mathtt{f}) X_{t_n}$. The linearity of $\widehat{X}$ automatically yields the $L^2$-increment stationarity.
\end{proof}
\noindent This result is only here to emphasize how general our definition of increment stationarity is. In fact, having in hands a \emph{linear} $H(C)$-indexed process might not be very useful (at least for the applications we have in mind), since it yields a somehow degenerate spectral decomposition, as we will see in the next section. However this linear process can be considered as a stochastic integral against $X$, whose space of (deterministic) integrands coincides with the RKHS of $X$.

\subsection{Spectral representation theorem for $L^2$-increment stationary processes}

In this section, no particular property of $L^2(T,m)$ is used except that it is a separable Hilbert space. Hence, the stochastic processes that appear here can be indexed by any separable Hilbert space $\Hc$.
\begin{definition}
Similarly to the definition of $L^2$-increment stationarity given in Definition \ref{def:WSIS}, a real-valued process $\{X(h),\ h\in \Hc\}$ is $L^2$-stationary if it has finite second moments at any point and if it satisfies, for any $f,g$ and $h\in \Hc$:
\begin{equation*}
\EE\left(X(f+h) X(g+h)\right) = \EE\left(X(f) X(g)\right) \ .
\end{equation*}
\end{definition}

In the sequel, $E^*\subset \Hc \subset E$ is a Gel'fand triple as in Lemma \ref{lem:HS}, and $S$ denotes the canonical injection from $E^*$ to $\Hc$.

\begin{proposition}\label{prop:spectral}
Let $C:\Hc\times \Hc\rightarrow \R$ be a covariance of the form $C(\kappa,\kappa') = \frac{1}{2}\left(\Phi(\kappa) + \Phi(\kappa') - \Phi(\kappa-\kappa')\right)$ for some symmetric continuous function $\Phi$. Then there exist a non-negative symmetric operator $R:E^*\rightarrow E$, and a finite Borel measure $m$ on $E$ such that:
\begin{align}\label{eq:decomp}
\forall \xi,\quad \Phi(S\xi) = \langle\xi,R\xi\rangle + 2\int_E \frac{1-\cos\langle\xi,x\rangle}{1\wedge \|x\|^2}\ m(\dd x)\ .
\end{align}
Besides, $R\circ i_E$ is a trace-class operator on $E$ (where $i_E = S^*\circ S$ is the canonical injection of $E^*\rightarrow E$), and $m(\{0\})=0$. (Note that the norm appearing in the above integral is the norm of $E$, and we do not precise it in the sequel unless the context is unclear.)
\end{proposition}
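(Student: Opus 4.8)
The plan is to reduce the statement to a Lévy--Khintchine representation of a symmetric convolution semigroup on $E$, the Gel'fand triple of Lemma~\ref{lem:HS} furnishing the passage from the Hilbert space $\Hc$ to a genuine measure on $E$. Throughout I normalise $\Phi(0)=0$, which is harmless: it is the natural value in the increment-stationary setting, and is in any case forced by evaluating the desired identity (\ref{eq:decomp}) at $\xi=0$.

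The first step is to observe that, since $C$ is a covariance, it is positive definite: for any finite family $\kappa_1,\dots,\kappa_n\in\Hc$ and scalars $c_1,\dots,c_n$ with $\sum_j c_j=0$, expanding $\sum_{j,k}c_jc_k\,C(\kappa_j,\kappa_k)\ge 0$ annihilates the $\Phi(\kappa_j)$ and $\Phi(\kappa_k)$ contributions and leaves $\sum_{j,k}c_jc_k\,\Phi(\kappa_j-\kappa_k)\le 0$. Thus $\Phi$ is a continuous, symmetric, conditionally negative definite function on $\Hc$, and Schoenberg's theorem yields that $e^{-t\Phi}$ is positive definite on $\Hc$ for every $t>0$.

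Next I would transport this to the dual. Composing with $S$, the function $\xi\mapsto e^{-t\Phi(S\xi)}$ is positive definite, real, even, and continuous on $E^*$; the Hilbert--Schmidt character of the embedding $\Hc\subset E$ (Lemma~\ref{lem:HS}) is exactly what promotes the $\Hc$-continuity to the Sazonov-type continuity required by the extension of Bochner's theorem on the triple. This produces, for each $t>0$, a symmetric probability measure $\mu_t$ on the Borel sets of $E$ with $\int_E e^{i\langle\xi,x\rangle}\,\mu_t(\dd x)=e^{-t\Phi(S\xi)}$, and the identity $e^{-(s+t)\Phi}=e^{-s\Phi}e^{-t\Phi}$ shows that $\{\mu_t\}_{t>0}$ is a symmetric convolution semigroup on $E$. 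I would then invoke the Lévy--Khintchine representation of such semigroups on a separable Hilbert space: the exponent decomposes as
\begin{equation*}
\Phi(S\xi)=\tfrac12\langle\xi,A\xi\rangle+\int_E\left(1-\cos\langle\xi,x\rangle\right)\nu(\dd x),
\end{equation*}
the linear drift being absent because $\Phi$ is even, where $A$ is the non-negative symmetric covariance operator of the Gaussian part and $\nu$ is the Lévy measure, with $\nu(\{0\})=0$ and $\int_E(1\wedge\|x\|^2)\,\nu(\dd x)<\infty$. Setting $R=\tfrac12 A$ (non-negative and symmetric, as required) and $m(\dd x)=\tfrac12(1\wedge\|x\|^2)\,\nu(\dd x)$ then gives (\ref{eq:decomp}); the measure $m$ is finite with $m(\{0\})=0$ by construction, while $R\circ i_E$ is trace-class because the Gaussian component of a convolution semigroup on a Hilbert space has a nuclear covariance operator.

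I expect the main obstacle to lie in the transfer step: verifying that the Hilbert--Schmidt embedding truly delivers the Sazonov continuity needed for Bochner's theorem on the triple to produce measures carried by $E$ (and not merely by some larger completion), and then controlling the Gaussian and jump parts of the limiting semigroup \emph{inside} $E$ --- in particular the trace-class property of $A$ and the $\|x\|^2$-integrability of $\nu$ near the origin. The finite-dimensional Lévy--Khintchine formula is classical, but its validity on the Hilbert space $E$ hinges delicately on the geometry of the triple $E^*\subset\Hc\subset E$ supplied by Lemma~\ref{lem:HS}, and this is where that lemma must be used in an essential way.
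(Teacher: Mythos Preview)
Your proposal is correct and follows essentially the same route as the paper: conditional negative definiteness of $\Phi$, Schoenberg, Sazonov/Bochner on the Hilbert--Schmidt triple to obtain probability measures on $E$, then the infinite-dimensional L\'evy--Khintchine formula, with symmetry of $\Phi$ killing the drift. The only noticeable difference is that the paper eliminates the drift by a short explicit computation (showing $\int_E \sin\langle n\xi,x\rangle\,m_0(\dd x)=n\int_E \sin\langle\xi,x\rangle\,m_0(\dd x)$ forces the sine integral to vanish, whence $m_0$ is symmetric and $b=0$), whereas you appeal directly to evenness; both are fine, though your one-line justification does sweep under the rug the compensator term in the L\'evy--Khintchine exponent, which is where the paper's extra care goes.
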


\begin{proof}
Due to the form of $C$, the application $\xi\in E^* \mapsto \Phi(S \xi)$ is continuous and negative definite (see Definition 4.3 and Proposition 4.4 in \cite{Bernstein}). Thus, according to Schoenberg's theorem, $\xi\mapsto \exp(-t\Phi(S \xi))$ is positive definite for any $t\in \R_+^*$. The rest of the proof is explained with full details in \cite{Rockner2011}, but we give the main ingredients for the sake of completeness.\\
It follows from Lemma \ref{lem:HS} and Sazonov's theorem (see \cite[Theorem 3.2]{Yan}), according to which a Hilbert-Schmidt map is radonifying, that since $\kappa\mapsto \exp(-\frac{t}{2}\Phi(\kappa))$ is continuous on $H$ for each $t>0$, it is the Fourier transform of a measure $\nu_t$ on $E$, i.e:
\begin{align*}
\forall \xi\in E^*,\quad e^{-\frac{t}{2}\Phi(S\xi)} = \int_E e^{i\langle \xi,x\rangle} \nu_t(\dd x)\ .
\end{align*}
By Lévy's continuity theorem in Hilbert spaces (\cite{Rockner2011}), $\{\nu_t,\ t>0\}$ weakly converges as $t\rightarrow 0$ to the Dirac mass $\delta_0$. Hence $\xi\in E^*\mapsto\exp\left(-\frac{1}{2}\Phi(S\xi)\right)$ is the characteristic function of the infinitely divisible distribution $\nu_1$. So by the Lévy-Khintchine theorem \cite[Theorem VI.4.10]{Parthasarathy}:
\begin{align*}
\forall \xi\in E^*,\quad \Phi(S\xi) = 2i\langle\xi,b\rangle + \langle\xi,R\xi\rangle - 2\int_E\left(e^{i\langle \xi,x\rangle} -1 -\frac{i\langle \xi, x\rangle}{1+\|x\|^2}\right) m_0(\dd x)\ ,
\end{align*}
where $R$ satisfies the hypotheses stated in the proposition, and $m_0$ is a Lévy measure, in the sense that $m_0(\{0\})=0$ and $\int_E \left(1\wedge \|x\|^2\right)\ m_0(\dd x) <\infty$.
Using the equality $\Phi(-\xi) = \Phi(\xi)$, we obtain that for any $\xi\in E^*$:
\begin{equation*}
\langle \xi, b\rangle = \int_E\left(\frac{\langle\xi,x\rangle}{1+\|x\|^2} - \sin\langle\xi,x\rangle\right)\ m_0(\dd x)\ .
\end{equation*}
The linearity in the left hand side of the previous equality implies that for any $n\in\N$, $\int_E \sin\langle n\xi,x\rangle\ m_0(\dd x) = n\int_E \sin\langle\xi,x\rangle\ m_0(\dd x)$. Hence $\int_E \sin\langle\xi,x\rangle\ m_0(\dd x) = 0$ for any $\xi\in E^*$ and it follows that $m_0$ is a symmetric measure. Thus $b=\int_E \frac{x}{1+\|x\|^2}\ m_0(\dd x) = 0$ also. The result follows by defining $m(\dd x) = (1\wedge \|x\|^2)\ m_0(\dd x)$.
\end{proof}

\begin{theorem}\label{th:00}
Let $Y$ be a real-valued $\Hc$-indexed $L^2$-increment stationary process with continuous covariance, and let $E^*\subset \Hc\subset E$ be a Gel'fand triple with Hilbert-Schmidt embedding. Then there exist a symmetric random measure $M$ on $E$, and an uncorrelated random vector $Z$ with covariance operator $R:E\rightarrow E$, such that:
\begin{equation*}
\forall \xi\in E^*,\quad Y_\xi = \int_E \frac{e^{i\langle \xi,x\rangle}-1}{1\wedge \|x\|}\ M(\dd x) + \langle \xi,Z\rangle\ .
\end{equation*}
The previous decomposition extends to $\mathcal{H}$ in the following manner: there exists a linear mapping $\mathcal{Z}:\mathcal{H}\rightarrow L^2(\Omega)$ which is uncorrelated with $M$, such that $\EE\left(\mathcal{Z}(\kappa)^2\right) = (\kappa,\tilde{R} \kappa)_\mathcal{H}$ where $\tilde{R}$ is a symmetric non-negative operator on $\mathcal{H}$ and
\begin{align*}
\forall \kappa\in \mathcal{H},\quad Y(\kappa) = \int_E \gamma(\kappa,x)\ M(\dd x) + \mathcal{Z}(\kappa)\ ,
\end{align*}
where $\gamma$ is the uniformly continuous extension of the mapping $\xi\in E^* \mapsto \frac{1-e^{i\langle\xi,\cdot\rangle}}{1\wedge\|\cdot\|}\in L^2(m)$ to a mapping from $\Hc \rightarrow L^2(m)$.\\
Conversely, any $\Hc$-indexed process with this representation is $L^2$-increment stationary.
\end{theorem}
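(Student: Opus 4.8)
The plan is to reduce the statement to the analytic decomposition of Proposition \ref{prop:spectral} and then to transport that decomposition, through a Hilbert-space isometry, from the ``frequency side'' $L^2(m)$ to the $L^2(\Omega)$-span of the increments of $Y$. First I would encode the increment structure of $Y$ into a single function. Since $Y$ is $L^2$-increment stationary, the quantity $\Phi(\kappa):=\EE\big((Y(\kappa+a)-Y(a))^2\big)$ does not depend on $a\in\Hc$, so it is well defined on $\Hc$; it is symmetric ($\Phi(-\kappa)=\Phi(\kappa)$, again by increment stationarity) and continuous (it equals $C(\kappa,\kappa)-2C(\kappa,0)+C(0,0)$, with $C$ the covariance of $Y$). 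Expanding $\EE\big((Y(\kappa)-Y(\kappa'))^2\big)$ yields the variogram identity $V(\kappa,\kappa'):=\EE\big((Y(\kappa)-Y(0))(Y(\kappa')-Y(0))\big)=\tfrac12\big(\Phi(\kappa)+\Phi(\kappa')-\Phi(\kappa-\kappa')\big)$, which is exactly the form to which Proposition \ref{prop:spectral} applies. Applying it to $\Phi\circ S$ produces the non-negative symmetric operator $R$ (with $R\circ i_E$ trace-class) and the symmetric Lévy measure $m$ with $\Phi(S\xi)=\langle\xi,R\xi\rangle+2\int_E (1-\cos\langle\xi,x\rangle)(1\wedge\|x\|^2)^{-1}\,m(\dd x)$.

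Next I would build $M$ and $Z$. For $\xi\in E^*$ set $\gamma_\xi(x)=(e^{i\langle\xi,x\rangle}-1)(1\wedge\|x\|)^{-1}$; the identity $|e^{i\theta}-1|^2=2(1-\cos\theta)$ together with the finiteness of the Lévy integral shows $\gamma_\xi\in L^2(m)$, and since $m$ is symmetric and $\gamma_\xi(-x)=\overline{\gamma_\xi(x)}$, any integral $\int\gamma_\xi\,\dd M$ against a symmetric random measure is real. A computation using the isometry property of the integral and the symmetry of $m$ gives $\int_E\gamma_\xi\overline{\gamma_{\xi'}}\,\dd m=\tfrac12\big(J(\xi)+J(\xi')-J(\xi-\xi')\big)$ where $J=\Phi\circ S-\langle\cdot,R\cdot\rangle$ is the Lévy part, which together with the elementary $\tfrac12(\langle\xi,R\xi\rangle+\langle\xi',R\xi'\rangle-\langle\xi-\xi',R(\xi-\xi')\rangle)=\langle\xi,R\xi'\rangle$ shows that the candidate covariance $\int\gamma_\xi\overline{\gamma_{\xi'}}\,\dd m+\langle\xi,R\xi'\rangle$ equals $V(S\xi,S\xi')$. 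The point is then to realise $M$ and $Z$ on $Y$'s own space: the Lévy--Khintchine split of $\Phi$ is canonical, so it induces an orthogonal splitting of the $L^2(\Omega)$-span of the increments $\{Y_\xi-Y_0\}$ into a ``jump'' part and a Gaussian part; transporting the white-noise structure of $L^2(m)$ and the $R$-form through the isometry $\gamma_\xi\mapsto(\text{jump part of }Y_\xi-Y_0)$, $\xi\mapsto(\text{Gaussian part})$ yields $M$ and $Z$, uncorrelated by orthogonality, with $Z$ an $E$-valued vector of covariance operator $R$ (well defined precisely because $R\circ i_E$ is trace-class). If the increments fail to span all of $L^2(m)$ one enlarges $\Omega$ by an independent random measure on the complement.

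The extension to $\Hc$ rests on a uniform-continuity estimate. Since $\|\gamma_\xi-\gamma_{\xi'}\|^2_{L^2(m)}=\int 2(1-\cos\langle\xi-\xi',x\rangle)(1\wedge\|x\|^2)^{-1}\,\dd m=J(\xi-\xi')$ depends only on $\xi-\xi'$ and $J$ is continuous at $0$ with $J(0)=0$ (because $\Phi$ is $\Hc$-continuous and $0\le\langle\xi,R\xi\rangle\le\Phi(S\xi)$), the map $\xi\mapsto\gamma_\xi$ is uniformly continuous for the $\Hc$-norm and extends uniquely to $\gamma:\Hc\to L^2(m)$, which defines $\int\gamma(\kappa,\cdot)\,\dd M$ for every $\kappa$. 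In parallel, $\xi\mapsto\langle\xi,R\xi\rangle$ is a non-negative quadratic form dominated by the $\Hc$-continuous $\Phi\circ S$, hence extends to a symmetric non-negative form $(\cdot,\tilde R\,\cdot)_\Hc$ on $\Hc$, and the linear isometry defining $\langle\cdot,Z\rangle$ extends to $\mathcal Z:\Hc\to L^2(\Omega)$ with $\EE(\mathcal Z(\kappa)^2)=(\kappa,\tilde R\kappa)_\Hc$; setting $Y(\kappa)=\int\gamma(\kappa,x)\,\dd M+\mathcal Z(\kappa)$ gives the stated representation. I expect this passage from the dense domain $E^*$ to all of $\Hc$ --- keeping the measure part and the Gaussian part consistently extended and still uncorrelated --- to be the main obstacle, more than the analytic decomposition, which is handed to us by Proposition \ref{prop:spectral}.

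For the converse I would simply compute. Writing an increment as $Y_{\xi+h}-Y_{\xi'+h}=\int(\gamma_{\xi+h}-\gamma_{\xi'+h})\,\dd M+\langle\xi-\xi',Z\rangle$ and using $\gamma_{\xi+h}-\gamma_{\xi'+h}=e^{i\langle h,\cdot\rangle}(\gamma_\xi-\gamma_{\xi'})$, the factor $e^{i\langle h,\cdot\rangle}$ has modulus one, so it disappears in the isometry $\int|\,\cdot\,|^2$; likewise the Gaussian increment $\langle(\xi+h)-(\xi'+h),Z\rangle=\langle\xi-\xi',Z\rangle$ is independent of $h$. Hence $\EE\big((Y_{f+h}-Y_{g+h})(Y_{f'+h}-Y_{g'+h})\big)$ does not depend on $h$, first for $h\in E^*$ and then, by the continuity established above, for all $h\in\Hc$, which is the required $L^2$-increment stationarity.
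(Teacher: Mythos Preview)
Your strategy reaches the right conclusion, but it follows a genuinely different route from the paper's, and the central construction step is phrased circularly.

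The paper never tries to split the increment span directly. Instead, for each fixed $\xi_0\in E^*$ it passes to the \emph{stationary} process $X^{(\xi_0)}_\xi:=Y_{\xi+\xi_0}-Y_\xi$, whose covariance is the Fourier transform of the finite measure $\tilde m_{\xi_0}(\dd x)=2(1-\cos\langle\xi_0,x\rangle)(1\wedge\|x\|^2)^{-1}m(\dd x)+\langle\xi_0,R\xi_0\rangle\,\delta_0$. Density of trigonometric polynomials in $L^2(E,\tilde m_{\xi_0})$ (Lemma~\ref{lem:density}) yields a random measure $\tilde M_{\xi_0}$ with $X^{(\xi_0)}_\xi=\int e^{i\langle\xi,x\rangle}\,\tilde M_{\xi_0}(\dd x)$; the algebraic identity $X^{(\xi_0+\xi_0')}_\xi=X^{(\xi_0')}_{\xi+\xi_0}+X^{(\xi_0)}_\xi$ forces these measures, once the atom at $0$ is removed, to be consistent and to assemble into a single $M$. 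The linear part arises as $Z_\xi=\tilde M_\xi(\{0\})$, and its linearity is verified by a direct second-moment computation. A byproduct is that $M$ and $Z$ live in the $L^2(\Omega)$-closure of the increments of $Y$, so no enlargement of the probability space is ever needed. Your approach, by contrast, would build the single isometry from $L^2(m)\oplus H_R$ (with $H_R$ the completion of $E^*$ under $\langle\cdot,R\cdot\rangle$) into $L^2(\Omega)$; this is shorter, but at the cost of possibly enlarging $\Omega$ when the range of $\xi\mapsto(\gamma_\xi,[\xi])$ is not all of $L^2(m)\oplus H_R$.

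The gap in your write-up is the sentence ``the L\'evy--Khintchine split of $\Phi$ is canonical, so it induces an orthogonal splitting of the $L^2(\Omega)$-span of the increments into a jump part and a Gaussian part; transporting \dots\ through the isometry $\gamma_\xi\mapsto(\text{jump part of }Y_\xi-Y_0)$, $\xi\mapsto(\text{Gaussian part})$''. The jump part and Gaussian part of $Y_\xi-Y_0$ are precisely what you are trying to construct; they do not exist until $M$ and $Z$ do, so you cannot use them to \emph{define} the isometry. The decomposition of $\Phi$ splits a number, not a random variable. What you should write is: define the single isometry $(\gamma_\xi,[\xi])\mapsto Y_\xi-Y_0$ from the span of these pairs in $L^2(m)\oplus H_R$ into $L^2(\Omega)$ (well defined because the inner products match by your covariance computation), extend it to the full direct sum---enlarging $\Omega$ if necessary---and \emph{then} set $M(A)$ and $Z_\xi$ to be the images of $(\mathbf 1_A,0)$ and $(0,[\xi])$. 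The orthogonal splitting of $Y_\xi-Y_0$ is the output of this construction, not its input. Your handling of the extension to $\mathcal H$ by uniform continuity and of the converse is correct and matches the paper's second step.
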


\begin{proof}
This proof is carried out in two steps. In the first one, we prove the decomposition on $E^*$, while in the second step, we extend it to $\mathcal{H}$.

\emph{First Step.} The $L^2$-increment stationarity implies that the covariance of $Y$ is of the form given in Proposition \ref{prop:spectral} (with a continuous function $\Phi$), thus we let $m$ and $R$ be defined according to the result of this proposition.
For some non-zero $\xi_0\in E^*$, let $X$ be defined by $X_\xi = Y_{\xi+\xi_0} - Y_\xi$. Then $X$ is $L^2$-stationary and its covariance satisfies:
\begin{align*}
\EE\left(X_\xi\ X_\eta\right) &= \EE\left((Y_{\xi+\xi_0}-Y_\xi)\ (Y_{\eta+\xi_0}-Y_\eta)\right)\\
&= \frac{1}{2}\left(\Phi(\xi-\eta+\xi_0) + \Phi(\xi-\eta-\xi_0) - 2\Phi(\xi-\eta)\right)
\end{align*}
and one can check that this quantity can be written $\Psi(\xi-\eta)$ (we omit the dependence in $\xi_0$ in this notation), where $\Psi$ reads:
\begin{equation*}
\forall \xi\in E^*,\quad \Psi(\xi) = \langle\xi_0,R\xi_0\rangle + 2\int_E e^{i\langle\xi,x\rangle}\frac{1-\cos\langle\xi_0,x\rangle}{1\wedge \|x\|^2}\ m(\dd x)\ .
\end{equation*}
Let us define a new finite measure on the Borel sets of $E$ by:
\begin{align*}
\tilde{m}_{\xi_0}(\dd x) = 2\frac{1-\cos\langle\xi_0,x\rangle}{1\wedge \|x\|^2} \mathbf{1}_{\{x\neq 0\}} m(\dd x) + \mathbf{1}_{\{x=0\}} \langle\xi_0,R\xi_0\rangle\ ,
\end{align*}
so that $\Psi$ can be written $\Psi(\xi) = \int_E e^{i\langle \xi,x\rangle}\ \tilde{m}_{\xi_0}(\dd x)$.\\

\noindent We shall now define a process $T_{\xi_0}$ on the vector space $\text{Span}\{e^{i\langle \xi,\cdot\rangle},\ \xi\in E^*\}$ satisfying the following linearity properties: for any $\lambda \in \R,\ \xi,\eta\in E^*$,
\begin{align*}
& T_{\xi_0}\left(\lambda e^{i\langle \xi,\cdot\rangle}\right) =\lambda X_\xi \\
& T_{\xi_0}\left(e^{i\langle \xi,\cdot\rangle} + e^{i\langle \eta,\cdot\rangle}\right) = X_\xi + X_{\eta}\ .
\end{align*}
We claim that this process is well-defined, as there does not exist either couples $(\lambda,\xi)\neq (\lambda',\xi') \in (\R\setminus\{0\})\times E^*$ such that $\lambda e^{i\langle \xi,\cdot\rangle}= \lambda' e^{i\langle \xi',\cdot\rangle}$, nor does there exist couples $(\xi,\eta)\neq (\xi',\eta')\in E^*\times E^*$ such that $e^{i\langle \xi,\cdot\rangle}+e^{i\langle \eta,\cdot\rangle}=e^{i\langle \xi',\cdot\rangle}+e^{i\langle \eta',\cdot\rangle}$.
Note that this process is an isometry of $L^2_{\C}(\tilde{m}_{\xi_0})\rightarrow L^2_\R(\Omega)$ since:
\begin{equation*}
\EE\left(T_{\xi_0}(e^{i\langle \xi,\cdot\rangle})^2\right) = \Psi(0) = \|e^{i\langle \xi,\cdot\rangle}\|^2_{L^2(\tilde{m}_{\xi_0})} \ .
\end{equation*}
Since the vector space spanned by the functions $e^{i\langle\xi,\cdot\rangle},\ \xi\in E^*$ is dense in $L^2(\tilde{m}_{\xi_0})$ (see the following Lemma \ref{lem:density}), we are able to define the following random measure:
\begin{equation*}
\tilde{M}_{\xi_0}(A) = T_{\xi_0}(\mathbf{1}_A)\ ,\quad \forall A\in\mathcal{B}(E)\ ,
\end{equation*}
so that $\tilde{M}_{\xi_0}$ has control measure $\tilde{m}_{\xi_0}$: $\EE\left(\tilde{M}_{\xi_0}(A)\right)=0$ and $\EE\left(\tilde{M}_{\xi_0}(A)\ \tilde{M}_{\xi_0}(B)\right) = \tilde{m}_{\xi_0}(A\cap B)$, for all $A,B\in \mathcal{B}(E)$.
One can now construct a stochastic integral against $\tilde{M}_{\xi_0}$ which satisfies, for any $f\in L^2(\tilde{m}_{\xi_0})$:
\begin{equation*}
\int_E f(x)\ \tilde{M}_{\xi_0}(\dd x) = T_{\xi_0}(f)\ .
\end{equation*}
In particular, for $f=e^{i\langle \xi,\cdot\rangle}$, we recover:
\begin{equation*}
X_\xi^{(\xi_0)} = \int_E e^{i\langle \xi,x\rangle} \ \tilde{M}_{\xi_0}(\dd x)\ ,\quad \forall \xi\in E^*\ .
\end{equation*}
Note that we shall use the notation $X_\xi^{(\xi_0)}$ for $X_\xi$ in the rest of this proof.
By the same density argument as above, there is a random variable $Z_{\xi_0}$ in the $L^2(\Omega)$-closure of $\text{Span}\{X_\xi,\ \xi\in E^*\}$ such that $Z_{\xi_0} = \tilde{M}_{\xi_0}(\{0\})$. At the end of this proof, we will give more details on $Z_{\xi_0}$. In between, let us define the random measure $\underbar{M}_{\xi_0}$ by:
\begin{equation*}
\forall A\in \mathcal{B}(E),\quad \underbar{M}_{\xi_0}(A) = \tilde{M}_{\xi_0}(A) - \mathbf{1}_{\{A\cap \{0\}\neq \emptyset\}} \tilde{M}_{\xi_0}(\{0\})\ ,
\end{equation*}
and the process $\underbar{X}^{(\xi_0)}$ by:
\begin{equation*}
\forall \xi \in E^*,\quad \underbar{X}^{(\xi_0)}_\xi = X^{(\xi_0)}_\xi - Z_{\xi_0} = \int_E e^{i\langle \xi,x\rangle} \underbar{M}_{\xi_0}(\dd x)\ .
\end{equation*}
A few facts can be easily deduced from the previous definitions: firstly, the control measure of $\underbar{M}_{\xi_0}$ is: $$\underbar{m}_{\xi_0} = 2\frac{1-\cos\langle\xi_0,x\rangle}{1\wedge \|x\|^2} \mathbf{1}_{\{x\neq 0\}} m(\dd x)\ ;$$ secondly, $\underbar{X}^{(\xi_0)}$ is still a stationary process; and finally, for any $\xi\in E^*$, $Z_{\xi_0}$ and $\underbar{X}_\xi^{(\xi_0)}$ are uncorrelated.

Let us come back to $X$ and let $\xi_0'\in E^*$: observe that for any $\xi\in E^*$,
\begin{align*}
X_\xi^{(\xi_0+\xi_0')} = X_{\xi+\xi_0}^{(\xi_0')} + X_\xi^{(\xi_0)}\ .
\end{align*}
Thus for any $\xi\in E^*$, $\int_E e^{i\langle \xi,x\rangle} \tilde{M}_{\xi_0+\xi_0'}(\dd x) = \int_E e^{i\langle \xi+\xi_0,x\rangle} \tilde{M}_{\xi_0'}(\dd x) + \int_E e^{i\langle \xi,x\rangle} \tilde{M}_{\xi_0}(\dd x)$. By symmetry, this implies:
\begin{align*}
\forall \xi\in E^*,\quad \int_E e^{i\langle\xi,x\rangle}\ \left(e^{i\langle \xi_0',x\rangle}-1\right)\ \tilde{M}_{\xi_0}(\dd x)=\int_E e^{i\langle\xi,x\rangle}\ \left(e^{i\langle \xi_0,x\rangle}-1\right)\ \tilde{M}_{\xi_0'}(\dd x)\ ,
\end{align*}
which can be transposed to $\underbar{M}$, since the previous integrals cannot charge $\{0\}$:
\begin{align}\label{eq:00}
\forall \xi\in E^*,\quad \int_E e^{i\langle\xi,x\rangle}\ \left(e^{i\langle \xi_0',x\rangle}-1\right)\ \underbar{M}_{\xi_0}(\dd x)=\int_E e^{i\langle\xi,x\rangle}\ \left(e^{i\langle \xi_0,x\rangle}-1\right)\ \underbar{M}_{\xi_0'}(\dd x)\ .
\end{align}
For the finite Borel measure $\underbar{m}_{\xi_0,\xi_0'}(\dd x) := 2(1-\cos\langle\xi_0',x\rangle)\ \underbar{m}_{\xi_0}(\dd x)$, we define for any $A\in \mathcal{B}(E)$ the mapping $\varphi_{\xi_0,\xi_0',A}: x\in E\mapsto \mathbf{1}_A(x) (1\wedge \|x\|) \left((e^{i\langle \xi_0,x\rangle}-1)(e^{i\langle \xi_0',x\rangle}-1)\right)^{-1}$. Since 
\begin{align*}
\EE\left(\left| \int_E \varphi_{\xi_0,\xi_0',A}(x)\ \left(e^{i\langle \xi_0',x\rangle}-1\right) \underbar{M}_{\xi_0}(\dd x) \right|^2\right) = \int_A m(\dd x) <\infty\ ,
\end{align*}
$\varphi_{\xi_0,\xi_0',A} \in L^2(\underbar{m}_{\xi_0,\xi_0'})$ and Lemma \ref{lem:density} states that $\varphi_{\xi_0,\xi_0',A}$ can be approximated by elements in $\text{Span}\{e^{i\langle\xi,\cdot\rangle},\ \xi\in E^*\}$.
Thus Equation (\ref{eq:00}) yields that for any $A\in \mathcal{B}(E)$ such that $A\cap \{0\}=\emptyset$, $\int_A \left(1\wedge \|x\|\right)\ \left(e^{i\langle \xi_0,x\rangle}-1\right)^{-1} \underbar{M}_{\xi_0}(\dd x)$ is independent of $\xi_0$ (and by definition, $\underbar{M}_{\xi_0}(\{0\})=0$). Thus we call this quantity $M(A)$, and one can verify that $M$ is a random measure whose control measure is precisely $m$. From the equality:
\begin{equation*}
\int_E \frac{e^{i\langle \xi,x\rangle} -1}{1\wedge \|x\|} \ M(\dd x) = \int_E \frac{e^{i\langle \xi,x\rangle} -1}{1\wedge \|x\|} \ \frac{1\wedge \|x\|}{e^{i\langle \xi,x\rangle} -1}\ \underbar{M}_\xi(\dd x) = \underbar{M}_\xi(E) \ ,\ \ \forall \xi\in E^*\ ,
\end{equation*}
and due to $\underbar{X}^{(\xi)}_0 = \underbar{M}_\xi(E)$, it is now clear that $Y$ admits the following representation:
\begin{equation}\label{eq:01}
\forall \xi\in E^*,\quad Y_\xi = Z_\xi + \int_E \frac{e^{i\langle \xi,x\rangle} -1}{1\wedge \|x\|} \ M(\dd x)\ .
\end{equation}

\vspace{0.1cm}

To conclude this part of the proof, we need to show that there exists a random variable $Z$ with values in $E$ such that $Z_\xi = \langle \xi,Z\rangle$ and whose covariance operator is $R$. Let us prove that for any $\xi,\eta\in E^*$, $Z_\xi+Z_\eta = Z_{\xi+\eta}$ a.s. Using Equality (\ref{eq:01}),
\begin{align}\label{eq:001}
\EE\left((Z_{\xi+\eta} - Z_\xi - Z_\eta)^2\right)
&= \EE\left(\left| Y_{\xi+\eta} - Y_\xi - Y_\eta + \int_E \frac{e^{i\langle \xi,x\rangle} + e^{i\langle \eta,x\rangle} - e^{i\langle \xi+\eta,x\rangle} - 1}{1\wedge \|x\|}\ M(\dd x) \right|^2\right) \nonumber\\
&= \EE\left(\left|Y_{\xi+\eta} - Y_\xi - Y_\eta\right|^2\right) + \EE\left(\left|\int_E \frac{e^{i\langle \xi,x\rangle} + e^{i\langle \eta,x\rangle} - e^{i\langle \xi+\eta,x\rangle} - 1}{1\wedge \|x\|}\ M(\dd x)\right|^2\right)  \nonumber\\
&\quad\quad + 2\EE\left((Y_{\xi+\eta} - Y_\xi - Y_\eta) \overline{\int_E \frac{e^{i\langle \xi,x\rangle} + e^{i\langle \eta,x\rangle} - e^{i\langle \xi+\eta,x\rangle} - 1}{1\wedge \|x\|}\ M(\dd x)} \right)
\end{align}
We analyse the three summands of the last line separately, and recall that the covariance of $Y$ is given by $C(\xi,\eta) = \frac{1}{2}(\Phi(\xi)+\Phi(\eta)- \Phi(\xi-\eta))$:
\begin{align*}
\EE\left(\left(Y_{\xi+\eta} - Y_\xi - Y_\eta\right)^2\right) &= 2\EE\left(Y_\eta^2\right) - 2\EE\left((Y_{\xi+\eta}-Y_\xi)Y_\eta\right)\\
&= 2\Phi(\xi) + 2\Phi(\eta) - \Phi(\xi+\eta) - \Phi(\xi-\eta)\ .
\end{align*}
The decomposition of $\Phi$ given in (\ref{eq:decomp}) implies that:
\begin{align}\label{eq:02}
\EE\left(\left(Y_{\xi+\eta} - Y_\xi - Y_\eta\right)^2\right) &= 4 \int_E \frac{1-\cos\langle\xi,x\rangle}{1\wedge\|x\|^2}\ m(\dd x) + 4\int_E \frac{1-\cos\langle\eta,x\rangle}{1\wedge\|x\|^2}\ m(\dd x) \nonumber\\
&\quad\quad -2\int_E \frac{1-\cos\langle\xi+\eta,x\rangle}{1\wedge\|x\|^2}\ m(\dd x) -2 \int_E \frac{1-\cos\langle\xi-\eta,x\rangle}{1\wedge\|x\|^2}\ m(\dd x)\ ,
\end{align}
because the quadratic terms annihilate.\\
Next, we remark that $Y_{\xi+\eta} - Y_\xi-Y_\eta = -\int_E \left(e^{i\langle \xi,x\rangle} + e^{i\langle \eta,x\rangle} - e^{i\langle \xi+\eta,x\rangle} - 1\right)\ M(\dd x) + R(\xi,\eta)$, where $R(\xi,\eta)=\tilde{M}_{\xi+\eta}(\{0\}) -\tilde{M}_{\xi}(\{0\})-\tilde{M}_{\eta}(\{0\})$, and also that $\EE\left(\tilde{M}_{\xi}(\{0\})\ \overline{M(A)}\right) = 0$ for any $\xi\in E^*$ and $A\in \mathcal{B}(E)$. Hence $R(\xi,\eta)$ is uncorrelated with $M$, so the sum of the second and third summand in Equation (\ref{eq:001}) is in fact equal to:
\begin{align*}
-\EE\left(\left|\int_E \frac{e^{i\langle \xi,x\rangle} + e^{i\langle \eta,x\rangle} - e^{i\langle \xi+\eta,x\rangle} - 1}{1\wedge \|x\|}\ M(\dd x)\right|^2\right) = - \int_E \frac{\left| e^{i\langle \xi,x\rangle} + e^{i\langle \eta,x\rangle} - e^{i\langle \xi+\eta,x\rangle} - 1 \right|^2}{1\wedge \|x\|^2}\ m(\dd x) \ .
\end{align*}
The sum between this term and (\ref{eq:02}) is precisely $0$. Thus $\EE\left((Z_{\xi+\eta} - Z_\xi - Z_\eta)^2\right)=0$.
We prove similarly that for any $\lambda \in \R$, $Z_{\lambda \xi} = \lambda Z_\xi$ a.s. Hence $Z_\xi$ must take the announced form.

\vspace{0.3cm}

\emph{Second Step.} Let $\Xi(S\xi) = 2\int_E \frac{1-\cos\langle \xi,x\rangle}{1\wedge \|x\|^2}\ m(\dd x)$ be the second part of the covariance $\Phi$. Then $\Xi$ extends to a function on $\mathcal{H}$. Indeed, the mapping:
\begin{align*}
\gamma:\ S(E^*)&\rightarrow L^2(m)\\
S\xi &\mapsto \frac{1-e^{i\langle \xi,\cdot\rangle}}{1\wedge\|\cdot\|}
\end{align*}
satisfies $\|\gamma(S\xi)-\gamma(S\eta)\|_{L^2(m)} = \|\gamma(S(\xi-\eta))\|_{L^2(m)} \leq \Phi\left(S(\xi-\eta)\right)^{1/2}$ for any $\xi,\eta\in E^*$, where the inequality holds since the difference between both terms is precisely $\langle \xi-\eta,R(\xi-\eta)\rangle\geq 0$. Note that $\Phi^{1/2}$ is only a seminorm on $\Hc$ (it might not separate point). Hence we consider the quotient space $E^*/\Phi$ endowed with the proper norm $\Phi^{1/2}$, where the equivalence relation is given by $\xi\sim \eta \Leftrightarrow \Phi\left(S(\xi-\eta)\right)=0$. We still denote by $\gamma$ the previous mapping.
Thus $\gamma$ is uniformly continuous as a mapping from $E^*/\Phi$ to $L^2(m)$. 
Hence by a classical analysis result, it extends to a uniformly continuous mapping (still denoted by $\gamma$) on the completion of $E^*/\Phi$ with respect to the $\Phi^{1/2}$ norm. Since $\Phi$ is continuous in $\Hc$, the closure of $E^*/\Phi$ includes $\mathcal{H}/\Phi$. So $\gamma$ can be finally considered as a mapping on the space $\mathcal{H}/\Phi$. Now define $\tilde{R}$ as follows:
\begin{align*}
\forall \bar{\kappa}\in \Hc/\Phi,\quad \tilde{R}(\bar{\kappa},\bar{\kappa})  = \Phi(\bar{\kappa}) - \|\gamma(\bar{\kappa})\|_{L^2(m)}^2\ ,
\end{align*}
and then $\tilde{R}(\bar{\kappa},\bar{\kappa}')$ by polarization. This is a nonnegative definite symmetric bilinear operator, as the limit of $R$ on $E^*/\Phi$. In fact, $\tilde{R}$ and $\gamma$ are well-defined on $\Hc$ by $\gamma(\kappa) = \gamma(\bar{\kappa})$ and $\tilde{R}(\kappa,\kappa') = \tilde{R}(\bar{\kappa},\bar{\kappa'})$ for any $\kappa,\kappa'\in \Hc$ ($\bar{\kappa}$ denotes the equivalence class of $\kappa$). Indeed if $\kappa_1,\kappa_2$ are two elements in the same equivalence class, $\|\gamma(\kappa_1)- \gamma(\kappa_2)\|_{L^2(m)}\leq \Phi(\kappa_1-\kappa_2)^{1/2}=0$, and:
\begin{align*}
\tilde{R}(\bar{\kappa},\kappa_1) - \tilde{R}(\bar{\kappa},\kappa_2) = \tilde{R}(\bar{\kappa},\bar{0}) &= \frac{1}{2}\left( \Phi(\bar{\kappa}) - \|\gamma(\bar{\kappa})\|_{L^2(m)}^2 +\Phi(\bar{0})- \|\gamma(\bar{0})\|_{L^2(m)}^2 - \Phi(\bar{\kappa}+\bar{0}) + \|\gamma(\bar{\kappa}+\bar{0})\|_{L^2(m)}^2 \right)\\
&=0\ .
\end{align*}

As for the processes, we proceed as follows: define $\{ \mathcal{M}(\kappa)=\int_E \gamma(\kappa)(x)\ M(\dd x),\ \kappa\in \mathcal{H}\}$. This process is well-defined due to the preceding construction of $\gamma$, and it coincides with the process $\int_E \frac{1-e^{i\langle \cdot,x\rangle}}{1\wedge \|x\|} M(\dd x)$ on $E^*$. Then define $\mathcal{Z}(\kappa) = Y(\kappa) - \mathcal{M}(\kappa)$, which coincides with $Z$ if $\kappa\in E^*$. This concludes the proof.
\end{proof}

\begin{lemma}\label{lem:density}
Let $E$ be a separable Banach space and $m$ a finite Borel measure on $E$. Then the space of trigonometric polynomials $S=\text{Span}\left\{e^{i\langle \xi,\cdot\rangle},\ \xi\in E^*\right\}$ is dense in $L^2(E,m)$.
\end{lemma}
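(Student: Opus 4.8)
The plan is to prove density by showing that the orthogonal complement of $S$ in the complex Hilbert space $L^2_\C(E,m)$ is trivial; since $S$ is a linear subspace, this is equivalent to its density. Note first that each $e^{i\langle\xi,\cdot\rangle}$ has modulus $1$ and $m$ is finite, so $S\subset L^2_\C(E,m)$. Let $g\in L^2_\C(E,m)$ be orthogonal to $S$, i.e. $\int_E g(x)\,e^{-i\langle\xi,x\rangle}\,m(\dd x)=0$ for every $\xi\in E^*$. As $E^*$ is a vector space, $-\xi$ ranges over $E^*$ together with $\xi$, so equivalently
\[
\int_E e^{i\langle\xi,x\rangle}\,g(x)\,m(\dd x)=0,\qquad \forall\,\xi\in E^*.
\]

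Since $m$ is finite, $L^2_\C(E,m)\subset L^1_\C(E,m)$, so $\nu(\dd x):=g(x)\,m(\dd x)$ defines a finite complex Borel measure on $E$, and the display above states that its characteristic functional $\xi\mapsto\int_E e^{i\langle\xi,x\rangle}\,\nu(\dd x)$ vanishes identically on $E^*$. The conclusion follows once one knows that the characteristic functional determines a finite (signed or complex) Borel measure on a separable Banach space, for then $\nu=0$, whence $g=0$ $m$-almost everywhere and $g=0$ in $L^2_\C(E,m)$; thus $S^{\perp}=\{0\}$ and $S$ is dense.

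It remains to justify this uniqueness, which is the heart of the argument. Writing $\nu=\nu_1+i\nu_2$ with $\nu_1,\nu_2$ finite real signed measures and using that a real measure $\sigma$ satisfies $\hat\sigma(-\xi)=\overline{\hat\sigma(\xi)}$, the vanishing of $\hat\nu$ on the symmetric set $E^*$ forces $\hat\nu_1\equiv\hat\nu_2\equiv0$; by the Hahn decomposition it therefore suffices to treat two finite positive measures with equal characteristic functionals. For such measures, I would reduce to the finite-dimensional case: for any $\xi_1,\dots,\xi_n\in E^*$ the image measure under $x\mapsto(\langle\xi_1,x\rangle,\dots,\langle\xi_n,x\rangle)$ is a finite measure on $\R^n$ whose Fourier transform is read off from the characteristic functional restricted to $\text{Span}(\xi_1,\dots,\xi_n)$; the classical uniqueness of characteristic functions on $\R^n$ then pins down all these image measures, i.e. the common values of the two measures on every cylinder set.

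Finally, the two measures agree on the algebra of cylinder sets, and here separability enters decisively: for a separable Banach space the cylindrical $\sigma$-algebra generated by $E^*$ coincides with the Borel $\sigma$-algebra $\mathcal{B}(E)$ (a countable norming family of functionals, obtained from Hahn--Banach, realises every closed ball as a cylinder set, and balls generate $\mathcal{B}(E)$ in a separable metric space). A monotone-class argument then propagates the equality from this generating algebra to all of $\mathcal{B}(E)$, giving $\nu=0$ and completing the proof. I expect the main obstacle to be precisely this infinite-dimensional uniqueness step --- in particular the identity between the cylindrical and Borel $\sigma$-algebras, which is exactly where the separability hypothesis on $E$ is used --- rather than the soft reduction to the triviality of $S^{\perp}$, which is routine.
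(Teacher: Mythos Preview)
Your argument is correct, and it takes a genuinely different route from the paper's. You work on the dual side: take $g\in S^\perp$, form the complex measure $\nu=g\cdot m$, observe that its characteristic functional vanishes, and then establish the uniqueness theorem for characteristic functionals on a separable Banach space by reducing to finite-dimensional marginals and invoking the equality of the cylindrical and Borel $\sigma$-algebras. The paper instead works on the primal side, via a functional monotone-class argument: it first notes that the $\sigma$-algebra generated by the functions $e^{i\langle\xi,\cdot\rangle}$ is all of $\mathcal{B}(E)$ (because trigonometric polynomials on $\R$ approximate the identity, so each $\langle\xi,\cdot\rangle$ is $\sigma(S)$-measurable, and then separability gives $\sigma(E^*)=\mathcal{B}(E)$), and then applies the multiplicative system theorem to the algebra $S$ to conclude that the $L^2$-closure $\bar S$ contains every bounded Borel function.

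Both proofs pivot on the same structural fact---that separability forces $\sigma(E^*)=\mathcal{B}(E)$---but package the remaining work differently. The paper's route is shorter once one is willing to quote the multiplicative system theorem; your route is more self-contained and recovers along the way the (independently useful) statement that finite Borel measures on a separable Banach space are determined by their characteristic functionals. Either approach is entirely adequate here.
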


\begin{proof}
We first recall that there exists a sequence of trigonometric polynomials on $\R$, let us denote it by $\{T_n: x\in \R \mapsto \sum_{k=1}^{k_n} a_{k,n} e^{i b_{k,n} x} \}_{n\in \N}$, that approximates the identity of $\R$ uniformly on compacts. Thus, for any $\xi\in E^*$, $T_n(\langle \xi, x\rangle)$ converges to $\langle \xi,x\rangle$ for any $x\in E$.\\
Hence, if $\Sigma$ denotes the $\sigma$-algebra generated by $\{e^{i\langle \xi,\cdot\rangle},\ \xi\in E^*\}$, then any $\xi\in E^*$ is $\Sigma$-measurable. Since the $\sigma$-algebra generated by all the $\xi\in E^*$ is the Borel $\sigma$-algebra of $E$, this implies that $\Sigma= \mathcal{B}(E)$.

Now we must adapt the multiplicative system theorem, as given in \cite[Corollary A.2]{Janson}. Let $V$ be the intersection of $\bar{S}$ (the closure of $S$ in $L^2(E,m)$) with the space of bounded Borel-measurable functions on $E$. Then $V$ is closed for bounded convergence, since for any bounded sequence $\{f_n\}_{n\in\N}$ of elements of $V$ that converges pointwise to a bounded function $f$, the dominated convergence theorem implies that $\|f_n-f\|_{L^2(E,m)}\rightarrow 0$. $V$ contains the constant functions and $S$ is closed under multiplication, hence the multiplicative system theorem states that $V$ contains all the bounded $\sigma(S)$-measurable functions, that is all the bounded Borel-measurable functions by our previous remark. Thus $\bar{S}$ contains all the bounded Borel-measurable functions, and this suffices to prove the result.
\end{proof}

\begin{remark}
It is possible to give a similar treatment to $L^2$-stationary processes, in which case the covariance reads:
\begin{align*}
C(S\xi,S\eta) = \Psi(S(\xi-\eta)) = \int_E e^{i\langle\xi-\eta,x\rangle}\ \nu(\dd x)\ ,\ \forall \xi,\eta\in E^*\ ,
\end{align*}
where $\nu$ is a finite Borel measure.
\end{remark}

\subsection{Applications}\label{subsec:examples}

Given a $T$-indexed random field $X$ with covariance $C$, the linear $H(C)$-indexed process $\widehat{X}$ constructed in Proposition \ref{prop:linearWSIS} has the following spectral representation: $\forall f\in H(C),\ \widehat{X}(f) = Z(f)$ where $Z: H(C)\rightarrow L^2(\Omega)$. Hence $\widehat{X}$ has no spectral measure and our theorem does not carry much information in that case. However as we will see in the next example, this does not mean that there is not another process whose restriction is $X$ and which has a spectral measure.

We recall that the covariance of the multiparameter fractional Brownian motion is given in (\ref{eq:cov mpfBm}). On the contrary to the Lévy fBm and the fractional Brownian sheet, the spectral representation for this process is only recent. In \cite{Richard2}, it was obtained as a special case of our theorem, due to special results available for stable measures on Hilbert spaces. Hence the present work yields a more generic and complete (although more lengthy) way to prove that:
\begin{align*}
\forall t\in \R_+^d,\quad \mathbb{B}^H_t = \int_E \gamma\left(\mathbf{1}_{[0,t]},x\right)\ M^H(\dd x) \ , 
\end{align*}
where $E$ is some Hilbert space in which $L^2(\R_+^d)$ is (Hilbert-Schmidt) embedded, $\gamma$ is defined as in Theorem \ref{th:00}, and $M^H$ has control measure $\Delta^H$ which is the Lévy measure of a stable measure on $E$. In particular, this representation has applications on the sample paths of the multiparameter fBm, since $B^H_t$ can now be written as a sum of independent processes if $E$ is sliced into disjoint subsets \cite{Richard2}.\\
It is also interesting to notice that $\Delta^H$ has a similar form to the control measure of the usual fractional Brownian motion. Indeed, we recall the spectral representation of the fractional Brownian motion:
\begin{equation*}
B^H_t = c_H \int_\R \frac{e^{itx}-1}{|x|^{H+\frac{1}{2}}} \mathbb{W}(\dd x)\ ,
\end{equation*}
where $c_H$ is a normalising constant and $\mathbb{W}$ is a complex Gaussian white noise. Hence in that case the control measure is simply $\frac{\lambda(\dd x)}{|x|^{1+2H}}$ while from \cite{Kuelbs73}, we know that $\Delta^H(B) = \int_0^\infty \frac{\dd r}{r^{1+2H}} \int_S \mathbf{1}_{B}(ry)\ \sigma^H(\dd y)$, where $\sigma^H$ is a finite, rotationally invariant measure on the unit sphere $S$ of $E$.

\section{Stationarity and self-similarity characterization}\label{sec:3}

Let us recall that the $L^2(T,m)$-fractional Brownian motion is the centred Gaussian process with covariance (\ref{eq:covariance}). In this section, the choice of $(T,m)$ is unimportant, hence the notation $L^2(T,m)$ becomes simply $L^2$, and $\|\cdot\|$ always refers to the $L^2(T,m)$ norm. We give two characterizations of the $L^2$-fBm: the first one is very similar to the characterization of the Lévy fBm, while the second one uses a notion of stationarity similar to the one defined for set-indexed processes in \cite{ehem,ehemMpfBm}.

\vspace{0.2cm}

We start with some definitions. Consider the set $\mathcal{G}$, which is the restriction of the general linear group of $L^2$ to bounded linear mappings $\varphi:L^2\rightarrow L^2$ such that:
\begin{equation*}
\forall f,g\in L^2, \quad \|f\| = \|g\| \Rightarrow \|\varphi(f)\| = \|\varphi(g)\| \ .
\end{equation*}
Let $\varrho: \mathcal{G} \rightarrow \R_+$ be the application that maps $\varphi$ to the square of its operator norm. Note that for any $\varphi \in \mathcal{G}$ and any $f\in L^2$, $\|\varphi(f)\| = \sqrt{\varrho(\varphi)} \ \|f\|$, and that $\varrho$ is a group morphism.

We will say that an $L^2$-indexed stochastic process $X$ is:
\begin{itemize}
\item $H$--self-similar, if:
\begin{equation}\label{eq:SS1}
\forall a>0,\quad \{a^{-H}X_{af},\ f\in L^2\}\overset{(d)}{=} \{X_f,\ f\in L^2\} \ ;\tag{SS1}
\end{equation}
\item strongly $H$--self-similar, if: 
\begin{equation}\label{eq:SS2}
\forall \varphi\in \mathcal{G},\quad \{X_{\varphi(f)}, \ f\in L^2\}\overset{(d)}{=} \{\varrho(\varphi)^{H} X_f, \ f\in L^2 \}\ ;\tag{SS2}
\end{equation} 
\item strongly $L^2$-increment stationary, if for any translation or orthogonal transformation $\psi$ of $L^2$:
\begin{equation}\label{eq:SI1}
\{X_{\psi(f)} - X_{\psi(0)},\ f\in L^2\}\overset{(d)}{=} \{X_f - X_0,\ f\in L^2\}\ ;\tag{SI1}
\end{equation}
\item weakly $L^2$-increment stationary, if for any $f_1,\dots, f_n \in L^2$, $g_1,\dots,g_n$ and $h\in L^2$:
\begin{equation}\label{eq:SI2}
\left(X_{f_1+h}-X_{g_1+h},\dots,X_{f_n+h}-X_{g_n+h}\right) \overset{(d)}{=} \left(X_{f_1}-X_{g_1},\dots,X_{f_n}-X_{g_n}\right) \ .\tag{SI2}
\end{equation}
\end{itemize}

\noindent The $L^2$-fBm satisfies all the above properties. (\ref{eq:SS1}) and (\ref{eq:SI1}) are direct analogues of the multiparameter properties presented in the introduction. They give a similar characterization:

\begin{proposition}\label{prop:miscCharac1}
Let $X$ be an $L^2$-indexed Gaussian process. $X$ is an $L^2$-fBm if and only if it is $H$--self-similar and increment-stationary in the strong sense (i.e. it satisfies (\ref{eq:SS1}) and (\ref{eq:SI1})), up to normalization of its variance.
\end{proposition}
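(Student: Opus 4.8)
My plan is to prove the two implications separately: the forward one by a direct covariance computation, and the converse one, which carries the real content, by reconstructing the covariance of $X$ from its canonical metric (variogram). The guiding principle throughout is that two centred Gaussian processes with the same covariance have the same law, so every distributional statement can be reduced to a second-moment identity. I would first note that both hypotheses force $X$ to be centred: evaluating (\ref{eq:SS1}) at $f=0$ gives $a^{-H}X_0\overset{(d)}{=}X_0$ for all $a>0$, whence $\VV(X_0)=0$ (here $H>0$ is used) and $X_0=0$ a.s.; and the mean $\mu(f)=\EE(X_f)$ is additive by the translation part of (\ref{eq:SI1}) while homogeneous of degree $H$ by (\ref{eq:SS1}), which forces $\mu\equiv0$.

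For the forward implication I would record that the $L^2$-fBm satisfies $\VV(\BB^H(f)-\BB^H(g))=m((f-g)^2)^{2H}$, a quantity invariant under the translations $f\mapsto f+h$ and the orthogonal maps $f\mapsto Of$ of (\ref{eq:SI1}) (both are $L^2$-isometries) and homogeneous under the scaling $f\mapsto af$. Combined with $\BB^H_0=0$ and Gaussianity, these metric identities upgrade to the process-level equalities in distribution of (\ref{eq:SI1}) and (\ref{eq:SS1}) (the self-similarity holding for the index dictated by the homogeneity of $m(\cdot)^{2H}$).

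The substance is the converse. Writing $v(f,g)=\VV(X_f-X_g)$, the translation part of (\ref{eq:SI1}) yields, by reading off second moments of pairs, $v(f+h,g+h)=v(f,g)$, so that $v(f,g)=V(f-g)$ with $V(u):=\VV(X_u)$ (using $X_0=0$). The orthogonal part of (\ref{eq:SI1}) gives $V(Of)=V(f)$; since in a Hilbert space any two vectors of equal norm are interchanged by some orthogonal map, $V$ depends on $f$ only through $\|f\|$, i.e. $V(f)=W(\|f\|)$. Finally (\ref{eq:SS1}) gives $W(ar)=a^{2H}W(r)$, hence $W(r)=c\,r^{2H}$ with $c=W(1)\ge0$, so that $\VV(X_f-X_g)=c\,\|f-g\|^{2H}$. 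Using $X_0=0$ to write $\EE(X_fX_g)=\tfrac12\big(\VV(X_f)+\VV(X_g)-\VV(X_f-X_g)\big)$, one recognises $c$ times a covariance of the form (\ref{eq:covariance}); since $X$ is centred Gaussian this identifies its law with that of an $L^2$-fBm up to the variance normalization $c$.

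The step I expect to be the main obstacle is this passage from the process-level equalities of (\ref{eq:SI1}) to a radial canonical metric: it hinges on the stationarity group being rich enough, namely that every pair of equinorm vectors of $L^2$ is related by an orthogonal transformation, so that orthogonal invariance collapses $V$ into a function of $\|f\|$ alone. The accompanying subtlety is realising that, because $X$ is centred Gaussian with $X_0=0$, only the variogram must be extracted from these distributional identities, and that a power-law radial profile together with $X_0=0$ is exactly enough to reconstruct the full covariance and close the argument by Gaussianity. I would also flag that $H>0$ is used essentially (both for $X_0=0$ and to pin the exponent in $W$), and that the degenerate case $c=0$, i.e. $X\equiv0$, must be excluded by nondegeneracy.
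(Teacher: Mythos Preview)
Your proof is correct and follows essentially the same route as the paper: establish $X_0=0$ from self-similarity, deduce that $X$ is centred from the interplay of (\ref{eq:SI1}) and (\ref{eq:SS1}), then use translation invariance to reduce the variogram to a one-argument function, orthogonal invariance to make it radial, and scaling to obtain the power law, finishing by polarization. Your handling of the mean (additive by translation, homogeneous of degree $H$ by scaling, hence zero since $2^H\neq 2$) is a slight variant of the paper's computation $\|f\|^H\,\EE(X_{f_0})=(\|f+g\|^H-\|g\|^H)\,\EE(X_{f_0})$, but both arguments are equivalent and valid for $H\neq 1$.
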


\begin{proof}
This proof is rather standard compared to the similar characterization of the usual fractional Brownian motion. The first step is to prove that $X$ has mean $0$.  By self-similarity, it is clear that $X_0 = 0$. Let $f_0$ be a unit vector of $L^2$, and any $f,g\in L^2$,
\begin{align*}
\EE\left(X_{f+g} - X_g\right) = \EE\left(X_f - X_0\right) = \EE(X_f) &= \EE\left(X_{\|f\| f_0}\right) = \|f\|^H\ \EE(X_{f_0}) \ ,
\end{align*}
where the first equality is (\ref{eq:SI1}) for a translation, the third is (\ref{eq:SI1}) for an orthogonal transformation mapping $f$ to $\|f\| f_0$, and the last equality is (\ref{eq:SS1}). But self-similarity and rotation invariance also yield:
\begin{equation*}
\EE\left(X_{f+g} - X_g\right) = \left(\|f+g\|^H - \|f\|^H\right)\ \EE(X_{f_0}) \ .
\end{equation*}
The equality between the last two equations implies that $\EE(X_{f_0})=0$, and so $\EE(X_f)=0,\ {\forall f\in L^2}$.

\noindent The covariance follows with the same arguments:
\begin{align*}
\EE\left(X_f - X_g\right)^2 = \EE\left(X_{f-g}\right)^2 = \|f-g\|^{2H}\ \EE(X_{f_0})^2 \ .
\end{align*}
The $L^2$-fBm is called standard if $\EE(X_{f_0})^2=1$ for any unit vector.
\end{proof}

Before stating our second characterization theorem, note that property (\ref{eq:SI2}) is equivalent to $L^2$-increment stationarity defined in Section \ref{sec:2} if $X$ is a Gaussian process. We briefly discuss (\ref{eq:SI2}) and (\ref{eq:SS2}) for $T$-indexed processes which are compatible with set-indexing. So let $X$ be such process, $\widehat{X}$ be its $L^2(T,m)$-indexed extension and $A$ be the associated set-valued mapping. The definition of \emph{measure increment stationarity} (presented in a weak form in (\ref{eq:measure stationarity})) is made precise here, in a form suited to non-Gaussian processes: for any $n\in \N$, any $t_0,t_1,\dots,t_n \in T$, and any $\tau_1,\dots,\tau_n \in T$,
\begin{equation*}
\forall i,j,\ m\left((A_{t_i}\bigtriangleup A_{t_0})\cap (A_{t_j}\bigtriangleup A_{t_0})\right) = m\left(A_{\tau_i}\cap A_{\tau_j}\right) \Rightarrow \left(X_{t_1}-X_{t_0}, \dots, X_{t_n}-X_{t_0}\right) \overset{(d)}{=} \left(X_{\tau_1},\dots, X_{\tau_n}\right)\ .
\end{equation*}
If $X$ is a process such that $\widehat{X}$ satisfies properties (\ref{eq:SI2}) and (\ref{eq:SS2}), then $X$ has the measure increment stationarity. Note that the property (\ref{eq:SS2}) is a generalization of the self-similarity proposed in \cite{ehem}, initially introduced for set-indexed processes.

\begin{proposition}\label{prop:miscCharac2}
Let $X$ be an $L^2$-indexed Gaussian process. $X$ is an $L^2$-fractional Brownian motion of parameter $H\in(0,1)$ if and only if $X$ satisfies (\ref{eq:SI2}) and (\ref{eq:SS2}) of order $H$, up to normalization of its variance.
\end{proposition}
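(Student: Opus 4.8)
The plan is to prove both implications, with the bulk of the work in the ``only if'' direction being essentially a computation, and the ``if'' direction requiring us to extract the covariance \eqref{eq:covariance} from the distributional hypotheses (\ref{eq:SI2}) and (\ref{eq:SS2}). Since $X$ is Gaussian, its law is determined by its mean and covariance, so it suffices to show that (\ref{eq:SI2}) and (\ref{eq:SS2}) force the mean to vanish and the covariance to take the form \eqref{eq:covariance}. For the forward direction, one checks directly that the $L^2$-fBm with covariance \eqref{eq:covariance} satisfies both properties: (\ref{eq:SS2}) follows because $\|\varphi(f)\| = \sqrt{\varrho(\varphi)}\,\|f\|$ for $\varphi\in\mathcal{G}$, so that $m\big((\varphi(f))^2\big)^{2H}$-type terms scale by $\varrho(\varphi)^{2H}$ (here one must be careful that the covariance depends on $f$ only through the $L^2$-norms $\|f\|,\|g\|,\|f-g\|$, which are all scaled by $\sqrt{\varrho(\varphi)}$), and (\ref{eq:SI2}) follows from the fact that increments depend only on norms of differences $f_i-g_j$, which are translation-invariant.

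For the converse, I would first establish that the process has mean zero and variance scaling like a power of the norm, then recover the covariance. First, $X_0=0$ follows from (\ref{eq:SS2}) applied with a $\varphi\in\mathcal{G}$ having $\varrho(\varphi)\neq 1$ (so that $X_0 \overset{(d)}{=} \varrho(\varphi)^H X_0$ forces $X_0=0$). Next, the key structural observation is that (\ref{eq:SI2}) implies that the law of an increment $X_f - X_g$ depends only on $\|f-g\|$: indeed, for any two pairs $(f,g)$ and $(f',g')$ with $\|f-g\|=\|f'-g'\|$, one can find a translation composed with an orthogonal transformation carrying $(f,g)$ to $(f',g')$, and (\ref{eq:SI2}) is exactly invariance under translation while (\ref{eq:SS2}) with orthogonal $\varphi$ (which has $\varrho(\varphi)=1$) gives rotation invariance. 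Combining this with self-similarity (\ref{eq:SS2}) applied to scalings $\varphi = a\,\mathrm{Id}$, which have $\varrho(\varphi)=a^2$, yields $\mathbb{E}\big((X_f - X_g)^2\big) = \|f-g\|^{2H}\,\sigma^2$ for a constant $\sigma^2 = \mathbb{E}(X_{f_0}^2)$ on a unit vector $f_0$; the mean-zero property for all $f$ follows as in the proof of Proposition \ref{prop:miscCharac1}. Finally, the covariance is reconstructed from the increment variances via the polarization identity $\mathbb{E}(X_f X_g) = \tfrac12\big(\mathbb{E}(X_f^2) + \mathbb{E}(X_g^2) - \mathbb{E}((X_f - X_g)^2)\big)$, using $X_0 = 0$ to identify $\mathbb{E}(X_f^2) = \mathbb{E}((X_f - X_0)^2) = \|f\|^{2H}\sigma^2$, which after normalization $\sigma^2=1$ is precisely \eqref{eq:covariance}.

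The main obstacle I anticipate is verifying the claim that the group $\mathcal{G}$ together with translations acts transitively enough on pairs of $L^2$-elements to reduce everything to norms. Concretely, one needs that for $\|f-g\| = \|f'-g'\|$ there genuinely exists an orthogonal transformation mapping $f-g$ to $f'-g'$ (immediate in a Hilbert space, by extending a single-vector isometry to a unitary), and that the relevant $\varphi$ lie in $\mathcal{G}$ and have the correct value of $\varrho$. Orthogonal transformations have $\varrho(\varphi)=1$ so (\ref{eq:SS2}) reads as exact distributional invariance, which is what makes the rotation-invariance step legitimate; scalar multiples $a\,\mathrm{Id}$ have $\varrho = a^2$ and recover self-similarity. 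Once this transitivity is in hand the remaining steps are routine polarization computations. A secondary subtlety, worth one sentence in the write-up, is that (\ref{eq:SI2}) is a multidimensional (finite-dimensional-marginal) statement, so the recovered law is determined not just at the level of single increments but jointly; however since the process is Gaussian and we have pinned down the full covariance function, the joint laws are automatically matched, completing the identification with the $L^2$-fBm.
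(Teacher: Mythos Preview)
Your proposal is correct and follows essentially the same route as the paper's proof: both establish $X_0=0$ from (\ref{eq:SS2}), then show the mean vanishes by comparing $\EE(X_{f+g}-X_g)$ computed via (\ref{eq:SI2}) and via (\ref{eq:SS2}), then obtain the increment second moment $\EE((X_f-X_g)^2)$ as a power of $\|f-g\|$ and finish by polarization. The only cosmetic difference is that you decompose the action of $\mathcal{G}$ into orthogonal maps ($\varrho=1$) and scalars $a\,\mathrm{Id}$ ($\varrho=a^2$), whereas the paper applies a single $\varphi\in\mathcal{G}$ carrying a unit vector $f_0$ to an arbitrary $f$ with $\varrho(\varphi)=\|f\|^2$; these are equivalent since any such $\varphi$ factors as a rotation followed by a dilation.
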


\begin{proof}
We first prove that $X$ is a centred process. Let $f_0\in L^2$ be a unit vector, and for any $f,g \in L^2$ we have:
\begin{align*}
\EE\left(X_{f+g} - X_g\right) = \EE\left(\varrho(\varphi_1)^H X_{f_0} - \varrho({\varphi_2})^H X_{f_0}\right)
\end{align*}
where $\varphi_1,\varphi_2\in \mathcal{G}$ are such that $f+g=\varphi_1(f_0)$ and $g =\varphi_2(f_0)$. We also have, by (\ref{eq:SI2}), that:
\begin{equation*}
\EE\left(X_{f+g} - X_g\right) = \EE\left(X_{f}\right) = \varrho({\varphi_3})^H \EE(X_{f_0})
\end{equation*}
where $\varphi_3 \in \mathcal{G}$ is such that $f = \varphi_3(f_0)$. We know by definition of $\varrho$ that $\varrho(\varphi_1) = \|f+g\|^2$, $\varrho(\varphi_2) = \|g\|^2$ and $\varrho(\varphi_3) = \|f\|^2$. Hence, the equality between the last two equations implies that:
\begin{equation*}
\left(\|f+g\|^{2H} - \|g\|^{2H}\right) \EE\left(X_{f_0}\right) = \|f\|^{2H}\ \EE\left(X_{f_0}\right) \ .
\end{equation*}
Since this is true for any $f,g\in L^2$, we must have $\EE(X_{f_0}) = 0$, and so $\EE(X_f) = 0,\ \forall f\in L^2$.
To obtain the covariance, just notice that by using (\ref{eq:SI2}) and (\ref{eq:SS2}) in the same fashion:
\begin{equation*}
\EE\left((X_f - X_g)^2\right) = \|f-g\|^{2H} \ \frac{\EE\left(X_{f_0}^2\right)}{\|f_0\|^{2H}} = \|f-g\|^{2H}  \ \EE\left(X_{f_0}^2\right)\ .
\end{equation*}
Therefore,
\begin{align*}
\EE\left(X_f\ X_g\right) &= \frac{1}{2}\left(\EE(X_f^2) + \EE(X_g^2) - \EE\left((X_f - X_g)^2\right)\right) \\
&= \frac{1}{2}\EE\left(X_{f_0}^2\right)\left(\|f\|^{2H} + \|g\|^{2H} - \|f - g\|^{2H}\right)\ .
\end{align*}
Finally, stationarity implies that $\EE\left(X_{f_0}^2\right) = \EE\left(X_{g_0}^2\right)$ for any $g_0$ of norm $1$.
\end{proof}

As a final remark, let us observe that we could not prove any such fractal characterization for the multiparameter fractional Brownian motion (i.e. the centred Gaussian process with covariance (\ref{eq:cov mpfBm})). Indeed, let us consider the following form of self-similarity: in (\ref{eq:SS2}), for the special choice of mappings $\varphi_a,\ a\in \R_+^*$ defined by
\begin{align*}
\text{for } t_1,t_2\in \R_+^d \text{ and } \mu_1,\ \mu_2\in \R,\quad \varphi_a\left(\mu_1 \mathbf{1}_{[0,t_1]} + \mu_2 \mathbf{1}_{[0,t_2]}\right) = \mu_1 \mathbf{1}_{[0,a t_1]} + \mu_2 \mathbf{1}_{[0,a t_2]}\ ,
\end{align*}
we say that a multiparameter process is $H$--self-similar if $X_{at} = \widehat{X}(\mathbf{1}_{[0,at]}) \overset{(d)}{=} \varrho(\varphi_a)^H X_t$. Note that here, ${\varrho(\varphi_a) = a^d}$. Despite that $\mathbb{B}^H$ is a process compatible with set-indexing (with $A_t = [0,t]$), that it is measure increment stationary and $H$--self-similar, we do not know if a centred Gaussian process $X$ with these three properties is a multiparameter fractional Brownian motion. If one was willing to use Proposition \ref{prop:miscCharac2} to prove this, the main difficulty would be to construct an $L^2$-indexed process extending the definition of $X$, which we leave as an open problem.

\vspace{0.3cm}

\paragraph{Acknowledgements.} The author thanks Erick Herbin and Ely Merzbach for initiating the discussion on the topics covered in this paper. He is also grateful to Nate Eldredge for his advice on the proof of Lemma \ref{lem:density}.

\end{document}